\newcommand{\demph}[1]{\uwave{#1}} 
\newtheorem{theorem}{Theorem}[section]
\newtheorem{lemma}[theorem]{Lemma}
\newtheorem{corollary}[theorem]{Corollary}
\newtheorem{proposition}[theorem]{Proposition}
\theoremstyle{definition}
\newtheorem{example}[theorem]{Example}
\newtheorem{remark}[theorem]{Remark}
\newtheorem{definition}[theorem]{Definition}
\newtheorem{question}[theorem]{Question}
\numberwithin{equation}{section}
\newcommand{\qq}[1]{``#1''}
\newcommand{\set}[1]{\left\{\,#1\,\right\}}  
\newcommand{\with}{\ \vrule\ }
\newcommand{\NN}{\mathbb{N}}
\newcommand{\RR}{\mathbb{R}}
\newcommand{\ZZ}{\mathbb{Z}}
\newcommand{\QQ}{\mathbb{Q}}
\newcommand{\kk}{\mathbb{K}}
\newcommand{\ST}{\mathcal{S}}
\newcommand{\FF}{\mathbb{F}}
\DeclareMathOperator{\dep}{depth}
\DeclareMathOperator{\pdim}{pdim}
\DeclareMathOperator{\hdep}{Hdep}
\DeclareMathOperator{\Tor}{Tor}
\newcommand{\gb}{\mathbf{g}}
\newcommand{\ab}{\mathbf{a}}
\newcommand{\bb}{\mathbf{b}}
\newcommand{\ub}{\mathbf{u}}
\newcommand{\vb}{\mathbf{v}}
\newcommand{\eb}{\mathbf{e}}
\newcommand{\db}{\mathbf{d}}
\newcommand{\mb}{\mathbf{m}}
\newcommand{\tb}{\bm{t}}
\newcommand{\Nb}{\mathbf{N}}
\newcommand{\mm}{\mathfrak{m}}
\renewcommand{\Nb}{\gb}
\newcommand{\Zb}{\bm{Z}}
\newcommand{\rb}{\mathbf{r}}
\newcommand{\HP}[1]{\mathcal{P}(#1)}
\newcommand{\STp}{\ST'}
\newcommand{\down}[1]{\mathcal{D}(#1)}
\newcommand{\coeff}[2]{\mathfrak{c}({#1}, {#2})}
\begin{document}
\title[Which series are Hilbert series?]{Which series are Hilbert series of graded modules over polynomial rings?}
\dedicatory{Dedicated to Winfried Bruns on the occasion of his 70th birthday}

\author{Lukas Katth\"an}
\address{Goethe-Universit\"at Frankfurt, FB Informatik und Mathematik, 60054 Frankfurt am Main, Germany}
\email{katthaen@math.uni-frankfurt.de}

\author{Julio Jos\'e Moyano-Fern\'andez}
\address{Universitat Jaume I, Campus de Riu Sec, Departamento de Matem\'aticas \& Institut Universitari de Matem\`atiques i Aplicacions de Castell\'o, 12071
Caste\-ll\'on de la Plana, Spain} \email{moyano@uji.es}

\author{Jan Uliczka}
\address{Universit\"at Osnabr\"uck, FB Mathematik/Informatik, 49069
Osnabr\"uck, Germany} \email{juliczka@uos.de}

\subjclass[2010]{Primary: 13C05; Secondary:  05E40, 13A02.}

\keywords{Hilbert series, Hilbert polynomial, Multigrading, Polynomial ring}

\thanks{The first author was partially supported by  
the German Research Council DFG-GRK~1916. The second author was partially supported by the Spanish Government Ministerio de Econom\'ia y Competitividad (MINECO), grants MTM2012-36917-C03-03 and MTM2015-65764-C3-2-P, as well as by Universitat Jaume I, grant P1-1B2015-02.}

\begin{abstract}
Let $S$ be a multigraded polynomial ring such that the degree of each variable is a unit vector; so $S$ is the homogeneous coordinate ring of a product of projective spaces.
In this setting, we characterize the formal Laurent series which arise as Hilbert series of finitely generated $S$-modules.

Also we provide necessary conditions for a formal Laurent series to be the Hilbert series of a finitely generated module with a given depth.
In the bigraded case (corresponding to the product of two projective spaces), we completely classify the Hilbert series of finitely generated modules of positive depth.
\end{abstract}

\maketitle

\section{Introduction}
Let $\kk$ be a field. We consider the polynomial ring $R=\kk[X_1, \ldots , X_m]$ in $m$ indeterminates equipped with a $\ZZ^n$-grading, such that the degree of each variable is one of the unit vectors $\eb_i$ of $\ZZ^n$.
This setup includes the standard $\NN$-grading, as well as the \emph{fine grading}, where $m=n$ and $\deg{X_i}=\eb_i$ for $i \in [n]$.
\medskip

Let $M=\bigoplus_{\ab \in \ZZ^n} M_{\ab}$ be a finitely generated $\ZZ^n$-graded $R$-module.
The \demph{Hilbert series} of $M$ is the formal Laurent series
\[
H_M :=\sum_{\ab \in \ZZ^n} (\dim_{\kk} M_{\ab}) \cdot \tb^{\ab}  \in \ZZ(\!(t_1, \dotsc, t_n)\!),
\]
where we set $\tb^{\ab} := t_1^{a_1}\dotsm t_n^{a_n}$ for $\ab = (a_1, \dotsc, a_n)$.
This is well-defined, because the graded components $M_{a}$ of $M$ are finite-dimensional $\kk$-vector spaces, and, since $R$ is positively graded, there exists a $\bb \in \ZZ^n$ such that $M_{\ab}=0$ if $\ab \ngeq \bb$ (componentwise).
The Hilbert series is known to carry important information about $M$, for example its dimension or its multiplicity.
In the present work, we consider the following question:
\begin{question}\label{Q}
	Which formal Laurent series arise as Hilbert series of $R$-modules (in a certain class)?
\end{question}

An obvious property of Hilbert series is that all their coefficients are nonnegative.
If we allow non--finitely generated modules then this is already all that can be said: Any nonnegative series $H=\sum_{\ab} c_\ab \tb^{\ab}$ is the Hilbert series of the $R$--module  $\bigoplus_\ab \left(R/\mm\right)(-\ab)^{c_\ab}$, for $\mm=(X_1, \ldots , X_m)$.

So we restrict our attention to finitely generated modules.
This condition yields a second necessary condition for a series $H$ to be a Hilbert series:
It has to be a rational function with denominator $\prod_i (1-\tb^{\deg X_i})$.
In the standard-$\NN$-graded situation, these two conditions already characterize Hilbert series, as it was shown by the third author in \cite[Corollary 2.3]{U}.
However, they are not sufficient in the multigraded situation, cf. \Cref{ex:nonh}.

One of the main results of the present paper is a complete answer to Question \ref{Q} in the $\ZZ^n$-graded situation, see \Cref{thm:hilberts}.
This generalization is non\-trivial, as there are new phenomena in this setting.
While this result is somewhat technical, we obtain very satisfying specializations in the fine-graded and the bigraded situations, respectively; see \Cref{cor:e2} and \Cref{cor:bigraded}.
Also, we clarify the relation between Hilbert series and merely nonnegative series in \Cref{lem:ecken}.

It seems natural to further generalize these ideas to arbitrary multigradings.
However, in this generality, arithmetical issues arise.
For instance, there exists a formal Laurent series with integral coefficients which is not a Hilbert series, but after multiplication with $2$ it is, cf. \Cref{ex:arith};
the example already shows that one cannot hope for a characterization using linear inequalities in this setting.
In the present paper, we do not further pursue this direction.

\medskip
One of the difficulties of Question \ref{Q} is that if $H$ is a Hilbert series of some module, then any nonnegative series which coincides with $H$ in all but finitely many coefficients is also a Hilbert series (to see this, if $H = H_M$ for some module $M$, then one might replace finitely many components of $M$ in the lowest degrees by copies of $\kk$).
Thus it seems natural to rule this out, i.e. to consider modules which do not contain a copy of the residue field as a submodule.
Algebraically, this amounts to requiring that the depth of $M$ should be positive.
Generalizing this idea leads to the notion of \demph{Hilbert depth}, introduced by the third author in \cite{U}. 
Recall that the Hilbert depth\footnote{In the literature concerned with the Stanley depth, the term ``Hilbert depth'' refers to a different invariant, see for example Bruns, Krattenthaler and Uliczka~\cite{BKU}. The latter is defined via Hilbert decompositions and so it is sometimes called \emph{decomposition Hilbert depth}. In the standard-$\ZZ$-graded setup, these two notions coincide, but in general our Hilbert depth is only an upper bound for the decomposition Hilbert depth.} of a formal Laurent series $H$ is defined as
\[ 
\hdep(H) := \sup \set{ \dep(N) ~|~ N \text{  f.~g.~gr.~$R$--module with } H_N=H }.
\]
We consider the Hilbert depth only for those formal Laurent series which actually arise as Hilbert series of some module.
Again, in the standard-$\NN$-graded setting, Hilbert series of a given Hilbert depth have been classified in \cite{U}.
Our next main result is a quite general class of linear inequalities which are satisfied by every Hilbert series with a given Hilbert depth.
We formulate our result in terms of the projective dimension, but at least over the polynomial ring, this is equivalent to the depth via the Auslander-Buchsbaum formula.
\begin{restatable*}{theorem}{TorUngl}\label{thm:tor}
	Let $R = \bigoplus_\ab R_\ab$ be a (commutative Noetherian) $\ZZ^n$-graded $\kk$-algebra, such that $\dim_\kk R_0 < \infty$.
	Let further $p \in \NN$ and $M, N$ be finitely generated $R$-modules.
	If $\pdim M \leq p$ and $N$ is a $p$-th syzygy module, then the following inequality holds:
	\begin{equation}\label{eq:allgungl}
	\frac{H_M H_N}{H_R} \geq 0.
	\end{equation}
\end{restatable*}

In general, a classification of Hilbert series with a given Hilbert depth seems to be very difficult.
Therefore, we contempt ourselves with an important special case.
Our third main result is a complete characterization of the Hilbert series with positive Hilbert depth in the bigraded (i.e. $\ZZ^2$-graded) setup, see \Cref{th:stc}.
The condition that we obtain is similar to the general inequalities of \Cref{thm:tor}, but quite different in nature to the condition of \Cref{thm:hilberts}.

In the previous paper \cite{MU}, the second and the third author characterized Hilbert series of positive Hilbert depth over a bivariate polynomial ring $\kk[X,Y]$ endowed with a non-standard $\ZZ$-grading. In \Cref{thm:nonstandart} we show that this result can be restated in a way analogous to \Cref{th:stc}.

\medskip
\paragraph{\bf Related work}
The classical result of Macaulay \cite{Mac} answers Question \ref{Q} for \emph{cyclic} modules in the standard-$\NN$-graded situation.
This work has recently been extended by Boij and Smith in \cite{BS}.
These authors study Hilbert series in the standard-$\NN$-graded setup, with the additional assumption that only modules ge\-nerated in degree $0$ are considered.
The main difference to the present work, however, is that in \cite{BS} \emph{closure} of the set of Hilbert series is considered (with respect to a suitable topology on the space of formal Laurent series).

\section{Notation and preliminaries}

Let us fix some notation before continuing. We will use boldface letters $\ab, \ub, \gb, \dotsc$ to denote elements of $\ZZ^n$ or $\NN^n$.
For such a vector $\ab$, we write $a_i$ for its $i$-th component.

For $i \in [n]$ we denote by $\eb_i$ the $i$-th unit vector. Moreover, for $\db = (d_1, \dotsc, d_n) \in \NN^n$ and $\tb = (t_1, \dotsc, t_n)$, we define
	\begin{align*}
	\tb^\db &:= \prod_{i=1}^n t_i^{d_i} &\!\!\!\!\!\!\!\!\!\!\!\!\!\!\!\!\text{and}\\
	(1-\tb)^\db &:= \prod_{i=1}^n (1-t_i)^{d_i}.
	\end{align*}

For a formal Laurent series $H \in \ZZ(\!(t_1, \dotsc, t_n)\!)$ we write $\coeff{H}{\ab}$ for
the coefficient of $\tb^\ab$ in $H$.
We call $H$ \demph{nonnegative} if every coefficient of $H$ is nonnegative and we denote this by $H \geq 0$.

We consider the partial order on $\ZZ^n$ which is given by coordinatewise comparison.
In other words, for $\ub, \vb \in \ZZ^n$ we write $\ub \leq \vb$ if and only if  $u_i \leq v_i$ for all $i \in [n]$.
Moreover, we denote the coordinate minimum resp.~ maximum of $\ub$ and $\vb$ by $\ub\wedge\vb$ resp.~ $\ub\vee\vb$.

\subsection{Multivariate Hilbert Polynomials}
Let $H = \sum_i h_i t^i$ be a univariate formal Laurent series, such that $H\cdot (1-t)^d$ is a Laurent polynomial. Then it is well-known that there exist a polynomial $p \in \QQ[Z]$ and $i_0 \in \ZZ$, such that $h_i = p(i)$ for all $i \geq i_0$.
If $H$ is a Hilbert series, then $p$ is called the \demph{Hilbert polynomial}.

Here we give a multivariate generalization of this fact.
\begin{lemma}\label{lem:HPoly}
	Let $Q \in \ZZ[t_1^{\pm 1}, \dotsc, t_n^{\pm 1}]$ be a Laurent polynomial, $\db \in \NN^n$ and
	\[ H = \frac{Q}{(1-\tb)^{\db}}. \]
	Then there exist a $\ub \in \ZZ^n$ and a polynomial $p \in \ZZ[Z_1, \dotsc, Z_n]$ such that $\coeff{H}{\ab} = p(\ab)$ for $\ab \geq \ub$.
	Moreover, $p$ is uniquely determined (but $\ub$ is not), and $\deg_{Z_i} p \leq d_i - 1$ for $1 \leq i \leq n$.
\end{lemma}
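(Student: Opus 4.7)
The plan is to reduce to the univariate case by explicitly expanding the denominator into a geometric series and then matching the product against the (finite) numerator.

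First, I would expand each factor of the denominator via the classical identity
\[ \frac{1}{(1-t_i)^{d_i}} = \sum_{k \geq 0} \binom{k+d_i-1}{d_i-1} t_i^k. \]
The map $k \mapsto \binom{k+d_i-1}{d_i-1}$ is the restriction to $\NN$ of a polynomial $q_i(Z_i) \in \QQ[Z_i]$ of degree $d_i - 1$. Multiplying out, I obtain
\[ \frac{1}{(1-\tb)^{\db}} = \sum_{\kb \in \NN^n} \Big( \prod_{i=1}^n q_i(k_i) \Big)\, \tb^{\kb}, \]
so each coefficient is a polynomial of multidegree bounded componentwise by $\db - (1,\ldots,1)$ evaluated at $\kb$.

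Second, I would write $Q = \sum_{\vb \in S} q_\vb \tb^{\vb}$ for a finite $S \subseteq \ZZ^n$ and set $\ub := \bigvee_{\vb \in S} \vb$ (the coordinatewise maximum). For any $\ab \geq \ub$ one has $\ab - \vb \geq 0$ for every $\vb \in S$, so multiplying the expansion above by $Q$ and comparing coefficients gives
\[ \coeff{H}{\ab} = \sum_{\vb \in S} q_\vb \prod_{i=1}^n q_i(a_i - v_i). \]
Hence the candidate polynomial is
\[ p(Z_1, \dotsc, Z_n) := \sum_{\vb \in S} q_\vb \prod_{i=1}^n q_i(Z_i - v_i), \]
which satisfies the degree bound $\deg_{Z_i} p \leq d_i - 1$ since each factor $q_i(Z_i - v_i)$ has degree $d_i - 1$ in $Z_i$ and no other $Z_j$ appears in it.

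Finally, uniqueness follows since any polynomial in $n$ variables vanishing on the shifted positive orthant $\{\ab \in \ZZ^n : \ab \geq \ub\}$ must vanish identically: this set is Zariski dense, so one can argue by induction on $n$, fixing $n-1$ coordinates large and using that a univariate polynomial with infinitely many zeros is zero.

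There is no real obstacle; the main care is bookkeeping the shifts $\ab - \vb$ and noting that $\ub$ is chosen large enough that all such shifts land in $\NN^n$, where the geometric-series expansion is valid. The subtle point worth flagging is that although $q_i(Z_i) \in \QQ[Z_i]$ in general (binomial coefficients have rational coefficients), the resulting $p$ is integer-valued on $\ZZ^n$ by construction, matching the convention of the statement.
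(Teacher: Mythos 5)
Your proof is correct and mirrors the paper's own argument: both expand $1/(1-\tb)^{\db}$ via the geometric series, take $\ub$ to be the coordinatewise join of the exponents in $Q$, observe that for $\ab \geq \ub$ the coefficient equals a finite sum of products of binomial-coefficient polynomials of degree $d_i-1$ in $Z_i$, and appeal to Zariski density of the shifted orthant for uniqueness. You are also right to flag the $\ZZ$ versus $\QQ$ subtlety: the constructed $p$ generally has rational coefficients and is only integer-valued, so the statement's $p \in \ZZ[Z_1, \dotsc, Z_n]$ should really read $p \in \QQ[Z_1, \dotsc, Z_n]$ (as the paper itself does in \Cref{prop:binomial}), a point the paper's proof does not remark on.
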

\begin{proof}
	Write $Q = \sum_{\ub \in \Omega} c_\ub \tb^\ub$ for a suitable finite set $\Omega \subseteq \ZZ^n$.
	Recall the series expansion
	\[ \frac{t^u}{(1-t)^d} = \sum_{a \geq u} \binom{a-u+d-1}{d-1} t^i.\]
	From this, it is clear that
	\[ \coeff{H}{\ab} = \sum_{\ub \in \Omega} c_\ub \binom{a_1 - u_1 + d_1 -1 }{d_1-1}\dotsm \binom{a_n - u_n + d_n -1 }{d_n-1} \]
	for all $\ab \geq \bigvee_{\ub \in \Omega} \ub$.
	Hence $\coeff{H}{\ab}$ is given by a polynomial of the claimed degree.
	
	The uniqueness follows from the fact that the Zariski closure of $\NN^n$ inside $\mathbb{C}^n$ equals $\mathbb{C}^n$.
\end{proof}

\begin{definition}\label{def:hpoly}
	In the situation of \Cref{lem:HPoly}, we call $p$ the \demph{Hilbert polynomial} of $H$ and denote it with $\HP{H}$.
	In the degenerate case $n=0$, $H$ is an integer and we set $\HP{H} = H$.
\end{definition}

\section{Which series are Hilbert series?}

Let $\kk$ be a field. We consider the polynomial ring $R=\kk[X_1, \ldots , X_m]$ equipped with a $\ZZ^n$-grading, such that each variable is homogeneous.
In this section, we deal with the question of which formal Laurent series $H \in \ZZ(\!(t_1, \dotsc, t_n)\!)$ arise as Hilbert series of finitely generated $R$-modules.
There are two obvious necessary conditions:
\begin{itemize}[$\diamond$]
	\item $H \geq 0$ coefficientwise, and
	\item $\prod_{i=1}^n (1-\tb^{\deg X_i}) H$ is a Laurent polynomial.
\end{itemize}
It is a consequence of \cite[Theorem 2.1]{U} that in the case of the standard $\NN$-grading (i.e. $n = 1$), these conditions are already sufficient.
In general, this is not true, as we will see below.

In a previous work, the second and third author already obtained a general characterization of Hilbert series, cf. \cite[Corollary 2.2]{MU}:
\begin{theorem}\label{thm:hdec}
	A formal Laurent series $H \in \ZZ(\!(t_1, \dotsc, t_n)\!)$ is the Hilbert series of a finitely generated $R$-module if and only if it can be written in the form
	\begin{equation}\label{eq:hdec}
		H = \sum_{I \subseteq [m]} \frac{Q_I}{\prod_{i \in I}(1-\tb^{\deg X_i})}
	\end{equation}
	for Laurent polynomials $Q_I \in \ZZ[t_1^{\pm 1}, \dotsc, t_n^{\pm 1}]$ and $I \subseteq [m]$ with nonnegative coefficients.
\end{theorem}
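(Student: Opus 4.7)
The plan is to handle the two implications separately. For the sufficiency direction ($\Leftarrow$), I would argue constructively: for each $I \subseteq [m]$, the quotient $R_I := R/(X_j : j \notin I) \cong \kk[X_i : i \in I]$ is a finitely generated graded $R$-module with Hilbert series $\prod_{i \in I}(1-\tb^{\deg X_i})^{-1}$. Writing each $Q_I = \sum_\ab c_{I,\ab} \tb^\ab$ as a finite sum with $c_{I,\ab} \in \NN$, the finite direct sum
\[
M := \bigoplus_{I \subseteq [m]} \bigoplus_\ab R_I(-\ab)^{\oplus c_{I,\ab}}
\]
is a finitely generated $\ZZ^n$-graded $R$-module, and $H_M = H$ follows by additivity of Hilbert series under direct sum.

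For the necessity direction ($\Rightarrow$), my strategy is a rewriting procedure on a common-denominator expression. Since $H$ is a Hilbert series of a finitely generated module, we may write $H = P / \prod_{i=1}^m(1-\tb^{\deg X_i})$ for some Laurent polynomial $P$, corresponding to the initial (but possibly non-positive) choice $Q_{[m]} := P$, $Q_I := 0$ for $I \subsetneq [m]$. To improve this toward nonnegativity, I would exploit the basic identity
\[
\frac{1}{\prod_{i \in I}(1-\tb^{\deg X_i})} = \frac{1}{\prod_{i \in I\setminus\{j\}}(1-\tb^{\deg X_i})} + \frac{\tb^{\deg X_j}}{\prod_{i \in I}(1-\tb^{\deg X_i})}
\]
for any $j \in I$, which reshuffles a term $c\tb^\ab$ in $Q_I$ into a term in $Q_{I \setminus \{j\}}$ plus a shifted term at $\ab + \deg X_j$ in $Q_I$. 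Applied iteratively and steered by the actual sign pattern of the coefficients of $H$, such exchanges should allow each negative contribution to be absorbed by a positive one forced by $H \geq 0$. Finite generation enters in that $H$ is supported above some $\bb \in \ZZ^n$, which prevents negativity from propagating arbitrarily far down in multidegree.

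The main obstacle is proving that the rewriting terminates. In the univariate case handled in \cite[Theorem 2.1]{U}, a direct induction on the degree of the numerator suffices, but with several variables one must design a well-founded measure that strictly decreases under each exchange and interacts properly with the subset lattice of $[m]$. A module-theoretic alternative---building a Stanley-like filtration of $M$ whose quotients are shifted copies of $R_I$ directly---is obstructed by the fact that $\ZZ^n$-graded prime ideals of $R$ need not be of the form $(X_j : j \notin I)$, so care must be taken to work at the level of Hilbert series rather than at the level of modules. In any event, the combinatorial heart of the argument is to exhibit, for each multidegree $\ab$, an explicit ``reason'' for the nonnegativity of $\coeff{H}{\ab}$ as a sum of products of nonnegative contributions, which is precisely what \eqref{eq:hdec} encodes.
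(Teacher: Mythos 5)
Your sufficiency direction ($\Leftarrow$) is correct and standard. The necessity direction ($\Rightarrow$) has a genuine gap, one you flag yourself: the rewriting scheme has no termination argument, and it is not clear one can be supplied. Applying your identity to a negative numerator term $c\tb^\ab$ of $Q_I$ yields a negative contribution $c\tb^\ab$ to $Q_{I\setminus\{j\}}$ \emph{and} a new negative term $c\tb^{\ab+\deg X_j}$ in $Q_I$; the negativity is duplicated and pushed up in degree rather than absorbed, and no obvious measure on $|I|$ together with multidegree is monotone under this move.

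The paper defers to \cite[Corollary 2.2]{MU}, but the standard module-theoretic route works here, and the obstruction you raise against it is not a real one. A filtration with quotients of the special form $R_I(-\ab)$ is indeed unavailable, but none is needed. Take a prime filtration $0 = M_0 \subsetneq \dotsb \subsetneq M_r = M$ with $M_k/M_{k-1}\cong (R/\mathfrak{p}_k)(-\ab_k)$ for $\ZZ^n$-graded primes $\mathfrak{p}_k$; additivity of Hilbert series over short exact sequences reduces the claim to $M=R/\mathfrak{p}$, and one inducts on $\dim R/\mathfrak{p}$. If $\dim R/\mathfrak{p}=0$ then $\mathfrak{p}=\mm$ and $H_{R/\mathfrak{p}}=1$ is the $I=\emptyset$ term. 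Otherwise some $X_i\notin\mathfrak{p}$, and this $X_i$ is a nonzerodivisor on the domain $R/\mathfrak{p}$, whence
\[
H_{R/\mathfrak{p}} \;=\; \frac{H_{R/(\mathfrak{p}+(X_i))}}{1-\tb^{\deg X_i}}
\]
with $\dim R/(\mathfrak{p}+(X_i))=\dim R/\mathfrak{p}-1$; filter $R/(\mathfrak{p}+(X_i))$ again and invoke the inductive hypothesis. Tracking which variables occur shows the denominators produced for $H_{R/\mathfrak{p}}$ involve only variables outside $\mathfrak{p}$, so $X_i$ is never repeated and every denominator has the required form $\prod_{i\in I}(1-\tb^{\deg X_i})$ for a genuine subset $I\subseteq[m]$. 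What closes the argument is induction on dimension, not any monomial structure on the graded primes.
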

A decomposition as in \Cref{eq:hdec} is called a \demph{Hilbert decomposition} of $H$.
This result is stated and proven in \cite{MU} for $\ZZ$-gradings only, but the proof given there can be easily extended for multigradings.

\begin{example}\label{ex:arith}
	Let $R = \kk[X_1, X_2, X_3]$ with the $\NN$-grading given by $\deg X_1 = 2, \deg X_2=3$ and $\deg X_3 = 5$.
	Consider the series
	\begin{align*}
	H &= \frac{1}{2}\left(\frac{t}{1-t^2}  + \frac{1}{1-t^3} + \frac{1+t}{1-t^5} + \frac{t^7}{(1-t^3)(1-t^5)}\right) \\
	&= \frac{t+t^3}{1-t^6} + \frac{1}{(1-t^5)(1-t^6)}
	\end{align*}
	One sees immediately that $2H$ has a Hilbert decomposition and therefore it is indeed a Hilbert series of a finitely generated $R$-module.
	In particular, $2H$ and thus $H$ satisfy the necessary conditions mentioned above.
	However, $H$ is not a Hilbert series, as it does not admit a Hilbert decomposition.
	
	To see this, note that $H$ has a pole of order $2$ at $t = 1$.
	Considering the possible summands in \Cref{eq:hdec}, it follows that $Q_{\set{2,3,5}} = 0$ and that at least one of $Q_{\set{2,3}}, Q_{\set{2,5}}$ and $Q_{\set{3,5}}$ is non-zero.
	One can compute that the $i$-th coefficient of $H$ is of the order $\frac{i}{30} + O(1)$.
	On the other hand, the $i$-th coefficient of $1/(1-t^a)(1-t^b)$ is of the order $\frac{1}{ab} + O(1)$ for coprime $a,b \in \NN$.
	As $2\cdot3,2\cdot5,3\cdot5 < 30$, the series $H$ does not have a Hilbert decomposition and thus it does not arise as a Hilbert series.
\end{example}

\subsection{The standard $\mathbb{Z}^n$-grading}

The criterion of \Cref{thm:hdec} is very useful for showing that a given Laurent series is a Hilbert series: One only needs to construct a Hilbert decomposition.
However, it is rather difficult to use this criterion to show that a given series is \emph{not} a Hilbert series. Moreover it does not provide a good insight into the structure of the set of Hilbert series.
We would like to have a characterization of the Hilbert series in terms of inequalities. 
In view of the preceding example there is no hope for such a characterization in full generality.

So we now specialize our considerations to the case that the degree of every variable of $R$ is a unit vector.
More precisely, we consider the case that $R = \kk[X_{ij} \with 1 \leq i \leq n, 1 \leq j \leq m_i]$, where $n \in \NN$, $\mb =  (m_1, \dotsc, m_n) \in \NN^n$ and $\deg X_{ij} = \eb_i$.
In this setting, we give a characterization of the Hilbert series of finitely generated modules over $R$ in terms of certain inequalities. 
Roughly speaking, this can be seen as an implicitization of the set of Hilbert series.
Before we can state our result we need to introduce some notation.

\begin{definition}
	Let $H = \sum_{\ab \in \ZZ^n} h_\ab \tb^\ab \in \ZZ(\!(t_1, \dotsc, t_n)\!)$ be a formal Laurent series.
	For $I \subseteq [n]$ and $\ub \in \ZZ^n$ we define
	\[ H|_{I,\ub} := \sum_{\ab \in \NN^I} h_{\ub + \ab} \tb^\ab \in \ZZ(\!(t_i | i \in I)\!),\]
	where $\NN^I := \set{\sum_{i \in I} c_i\eb_i \with c_i \in \NN} \subseteq \NN^n$.
	We call $H|_{I,\ub}$ the \demph{restriction of $H$} to $\ub + \NN^I$.
\end{definition}
	Note that $H|_{\emptyset,\ub} = \HP{H|_{\emptyset,\ub}}= \coeff{H}{\ub}$ for $\ub \in \ZZ^n$.
	Also, note that the summands of $\tb^\ub H|_{I,\ub}$ are summands of $H$. It is more convenient to consider $H|_{I,\ub}$ instead of $\tb^\ub H|_{I,\ub}$, because the former lives in a smaller ring of Laurent series.

\begin{definition}
	Let $p \in \QQ[Z_1, \dots, Z_n]$ be a polynomial.
	We call a monomial $\Zb^\rb$ appearing in $p$ \demph{extremal} if it does not divide any other monomial of $p$.
	Moreover, we say that $p$ has \demph{positive extremal coefficients} if the coefficient of every extremal monomial of $p$ is positive.
\end{definition}

The following characterization of Hilbert series of $R$-modules is the main result of this section.
\begin{theorem}\label{thm:hilberts}
	The following statements are equivalent for a formal Laurent series $H \in \ZZ(\!(t_1, \dotsc, t_n)\!)$:
	\begin{enumerate}
		\item There exists a finitely generated graded $R$-module $M$ whose Hilbert series equals $H$.
		\item $H$ satisfies the following two conditions:
		\begin{enumerate}
			\item $H \cdot \prod_{i=1}^n (1-t_i)^{m_i}$ is a polynomial, and
			\item for every $\ub \in \ZZ^n$ and every $I \subseteq [n]$, the Hilbert polynomial of the restriction $H|_{I,\ub}$ of $H$ has positive extremal coefficients.
		\end{enumerate}
	\end{enumerate}
\end{theorem}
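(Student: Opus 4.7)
My plan is to establish the equivalence by proving the two directions separately, using \Cref{thm:hdec} as the main tool for the harder backward implication.

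\emph{Forward direction (1) $\Rightarrow$ (2).} Condition (2a) is immediate from finite generation: the Hilbert series of a f.g.\ $\ZZ^n$-graded $R$-module is a rational function with denominator dividing $\prod_i (1-t_i)^{m_i}$. For (2b), I would first show that each restriction $H|_{I,\ub}$ is itself the Hilbert series of a f.g.\ graded module over $R_I := \kk[X_{ij} : i \in I]$, namely the slice
\[ M^{(I, \ub)} := \bigoplus_{\bb \in \NN^I} M_{\ub + \bb}, \]
which is closed under the action of $R_I$ and inherits finite generation from a prime filtration of $M$. This reduces the claim to: for any f.g.\ graded $R$-module $N$, $\HP{H_N}$ has positive extremal coefficients. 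For the latter, let $J$ denote the support of an extremal $Z^\rb$ in $\HP{H_N}$; I would identify the coefficient of $Z^\rb$ with a positive mixed multiplicity of $N$ along the grading directions indexed by $J$. Concretely, by considering the quotient $N / (X_{ij} : i \notin J)\, N$ (possibly after replacing $N$ with one having the same Hilbert series and Cohen--Macaulay with respect to these variables), this extremal coefficient is exhibited as a nonzero dimension count, hence positive.

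\emph{Backward direction (2) $\Rightarrow$ (1).} The goal is to construct a Hilbert decomposition of $H$ via \Cref{thm:hdec}. I proceed by induction on a complexity measure of $H$, for example the total degree of $\HP{H}$ together with a secondary measure controlling the ``Laurent-polynomial tail''. In the base case $\HP{H} = 0$, condition (2a) forces $H$ to be a Laurent polynomial, while applying (2b) with $I = \emptyset$ gives $\coeff{H}{\ab} \geq 0$ for all $\ab$ (since $\HP{H|_{\emptyset, \ab}} = \coeff{H}{\ab}$); the trivial decomposition $Q_\emptyset := H$, $Q_I := 0$ otherwise, finishes this case. In the inductive step, pick an extremal $Z^\rb$ of $\HP{H}$ with positive coefficient $c$ (afforded by (2b) with $I = [n]$), let $J$ denote its support, and subtract from $H$ a piece of the form
\[ P := c \prod_{i \in J} r_i! \cdot \frac{\tb^\vb}{\prod_{i \in J}(1-t_i)^{r_i + 1}} \]
for a carefully chosen offset $\vb \in \ZZ^n$. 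A direct computation verifies that $\HP{H - P}$ has strictly smaller complexity, so the induction terminates in the base case.

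The main obstacle is this inductive step of the backward direction, specifically the preservation of (2b) after subtracting $P$. The piece $P$ enters many restrictions $(H - P)|_{I', \ub'}$ simultaneously, and each restricted Hilbert polynomial must continue to have positive extremal coefficients. Achieving this forces $\vb$ to be chosen compatibly with the ``extremal witness'' data furnished by (2b) for $H$ itself. I expect $\vb$ to be dictated by the positions of nonzero coefficients of $H$ in the directions orthogonal to $J$, selected at a suitable ``corner'' where $H$ is known to be positive. Verifying that this choice propagates consistently through all restrictions $(I', \ub')$ will be the combinatorial heart of the argument.
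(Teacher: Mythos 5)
Your backward direction has a concrete error in the base case: $\HP{H}=0$ does \emph{not} force $H$ to be a Laurent polynomial. For example with $n=2$, the series $H = 1/(1-t_1) = \sum_{i\geq 0} t_1^i$ has $\HP{H}=0$ (its coefficients vanish on every cone $\ab \geq \ub$ with $u_2 \geq 1$), yet it has infinitely many nonzero coefficients. When $\HP{H}=0$, all one can say is that the support of $H$ is contained in finitely many translates of coordinate hyperplanes; the paper handles this case by decomposing $H$ as a finite sum of restrictions $H|_{[n]\setminus\{i\},\ub}$ in $n-1$ variables and running an induction \emph{on the number of variables}, not on the degree of $\HP{H}$. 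Your ``complexity measure'' (degree of $\HP{H}$ plus a tail measure) therefore does not match the actual structure of the problem.

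The second, acknowledged, gap is the real one you would need to fill, and the paper avoids it entirely. You propose to strip off one extremal term of $\HP{H}$ at a time and argue that condition (2b) survives for $H-P$; this is genuinely hard because a single subtracted piece $P$ perturbs the Hilbert polynomials of many restrictions simultaneously. The paper instead first proves a purely polynomial lemma (Proposition~\ref{prop:binomial}): a polynomial has positive extremal coefficients iff it is an $\NN$-linear combination of products of binomials $\binom{Z_i+r_i-a_i}{r_i}$. Applying this to $\HP{H}$ produces a \emph{complete} series $H_1$ whose Hilbert polynomial equals $\HP{H}$, and subtracting $\tb^\gb H_1|_{[n],\gb}$ from $H$ in one step yields $H'$ with $\HP{H'}=0$. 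The verification that $H'$ still satisfies (2b) is then trivial: each restriction of $H'$ either coincides with the corresponding restriction of $H$ (when $\ub+\NN^I$ misses $\gb+\NN^n$) or has zero Hilbert polynomial. This single-shot subtraction followed by the $\HP{H'}=0$ case is the key structural idea you are missing.

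For the forward direction, your idea of interpreting extremal coefficients of $\HP{H_N}$ as positive mixed multiplicities (after a CM replacement) is a genuinely different route from the paper's; the paper simply expands a Hilbert decomposition of $H$ and reads off that $\HP{H}$ is an $\NN$-combination of the binomial products, then cites the easy direction of Proposition~\ref{prop:binomial}. Your multiplicity argument is only sketched, and it is not clear that the coefficient of an arbitrary extremal monomial (as opposed to a leading one) is a mixed multiplicity or can be realized as a dimension count after quotienting; this would need substantial justification. Given the clean one-line argument available via the Hilbert-decomposition formula, the multiplicity route buys nothing here and is considerably harder to make rigorous.
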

\begin{remark}
	The condition that $H \geq 0$ is implicit in the last condition of \Cref{thm:hilberts} above, because $\HP{H|_{\emptyset, \ub}}= \coeff{H}{\ub}$ for $\ub \in \ZZ^n$.
\end{remark}

\begin{example}\label{ex:nonh}
	Let $n = 2$ and $\mb = (3,3)$.
	Consider the series
	\[ H := \sum_{i \geq 0}\sum_{j \geq 0} (i-j)^2 t_1^i t_2^j = \frac{t_1t_2^2 + t_1^2t_2 + t_1^2 - 6t_1t_2+t_2^2 + t_1+t_2}{(1-t_1)^3(1-t_2)^3}.\]
	Clearly $H \cdot \prod_{i=1}^2 (1-t_i)^{m_i}$ is a polynomial, and it is also clear that $H \geq 0$. So $H$ satisfies the obvious necessary conditions for being a Hilbert series.
	
	Moreover, $\HP{H} = (i-j)^2 = i^2 -2ij+j^2$.
	Here, all three monomials are extremal, so in particular $\HP{H}$ does not have positive extremal coefficients.
	Hence $H$ does not arise as Hilbert series of a finitely generated $R$-module.

Although this can be obtained from \Cref{thm:hilberts}, one way to see this directly is as follows:
Assume to the contrary that there $H = H_M$ for a finitely generated $R$-module $M$.
We write $\deg_1$ and $\deg_2$ for the first and second component of the degree of an element $m \in M$, respectively.
Let $g_1, \dotsc, g_r$ be a set of generators of $M$ and let $\deg g_k = (i_k,j_k)$.
If $i_k < j_k$, then $(R g_k)_{(j_k,j_k)} = 0$ and hence $\deg_1 m < j_k$ for any $m \in R g_k$.
Similarly, if $i_k > j_k$, then $\deg_2 m < i_k$ for any $m \in R g_k$. Hence, in both cases we have that $\min(\deg_1 m, \deg_2 m) \leq \max(i_k,j_k)$ for all $m \in R g_k$. As $M$ is generated by $g_1, \dotsc, g_r$, it follows that $\min(\deg_1 m, \deg_2 m)\leq \max(i_1,\dotsc,i_r, j_1,\dotsc,j_r)$ for all $m \in M$.
This contradicts our assumption that $H_M = H$.
\end{example}

\begin{example}
	Our next example shows that it is not sufficient to consider only the Hilbert polynomial of $H$.
	Let
	\[ H := \sum_{i \geq 0}\sum_{j \geq 0}\sum_{j \geq 0} \left( (i-j)^2 + ijk \right) t_1^i t_2^j t_3^k.\]
	It holds that $\HP{H} = (i-j)^2 + ijk$, so all extremal coefficients are nonnegative.
	On the other hand, for $I = \set{1,2}, \ub = 0$ it holds that $\HP{H|_{I,\ub}} =  (i-j)^2$ and this polynomial does not have positive extremal coefficients.
	Thus $H$ is not a Hilbert series.
\end{example}

One common trait in the theory of Hilbert series is that many properties can be determined by examining only those exponents which are below the exponent $\gb$ which is the join of the exponents of the numerator.
So one might hope to sharpen \Cref{thm:hilberts} by showing that one only needs to consider restrictions $H_{I, \ub}$ for $\ub \leq \gb$.
However, the next example shows that this does not hold.
\begin{example}
	For $\lambda \in \NN$, consider the series
	\[ H := \sum_{i \geq 0}\sum_{j \geq 0}\sum_{k \geq 0} \left( i^2 + j^2 + ij(k - \lambda)(k-\lambda-2) \right) t_1^i t_2^j t_3^k.
	\]
	This series is nonnegative, because for $i,j,k \in \NN$ it holds that
	\[ i^2 + j^2 + ij(k - \lambda)(k-\lambda-2) \geq i^2 + j^2 + ij\cdot (-1)  \geq 0. \]
	The Hilbert polynomial clearly has nonnegative extremal coefficients.
	Moreover, for $I = \set{2,3}$ and any $\ub \in \ZZ^n$, the Hilbert polynomial of the restriction is 
	\[ \HP{H|_{I,\ub}} = j^2 + jk^2 + \text{ lower terms}\]
	so it has nonnegative extremal coefficients. By symmetry, the same holds for $I = \set{1,3}$.
	Further, it follows from \Cref{lem:ecken} below that the Hilbert polynomials of restrictions $H|_{I,\ub}$ with $|I| = 1$ have nonnegative extremal coefficients. 
	
	So it remains to consider the case $I = \set{1,2}$. Let $\ub = (0,0, \kappa) \in \ZZ^3$. Then
	\[ \HP{H|_{I,\ub}} = i^2 + j^2 + (\kappa - \lambda)(\kappa-\lambda-2)ij \]
	and all three terms are extremal. So this restriction has nonnegative extremal coefficients if and only if $\kappa \neq \lambda + 1$. In particular, $H$ is not a Hilbert series.
	
	On the other hand, writing $H$ as a rational function one sees that the degrees of all terms in the numerator are less or equal than $\gb := (2,2,2) \in \NN^n$.
	Thus for $\lambda \geq 2$ it is not sufficient to consider restrictions $H|_{I,\ub}$ with $\ub \leq \gb$.
\end{example}

We need some preparations before we give the proof of \Cref{thm:hilberts}.
First, note that Hilbert decompositions are compatible with restrictions in the following sense:
\begin{lemma}
	Let $H \in \ZZ(\!(t_1, \dotsc, t_n)\!)$ be a formal Laurent series.
	If $H$ has a Hilbert decomposition, then so does every restriction $H|_{I, \ub}$ of $H$ for $\ub \in \ZZ^n, I \subseteq[n]$.
\end{lemma}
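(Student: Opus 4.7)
The plan is a three-stage reduction to a one-variable identity.

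First, the restriction operation is $\ZZ$-linear, and sums of Hilbert decompositions remain Hilbert decompositions, so it suffices to verify the claim for a single summand of the given decomposition of $H$. Writing each $Q_J$ as a nonnegative combination of Laurent monomials $\tb^\vb$ and grouping factors of the denominator by coordinate, one reduces further to the case
\[ H \;=\; \frac{\tb^\vb}{\prod_{i=1}^n (1-t_i)^{f_i}} \;=\; \prod_{i=1}^n \frac{t_i^{v_i}}{(1-t_i)^{f_i}}, \]
where $\vb \in \ZZ^n$ and $0 \le f_i \le m_i$ for each $i$.

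Second, because the factors of $H$ live in pairwise distinct variables, the restriction operator factors multiplicatively:
\[ H|_{I,\ub} \;=\; \Bigl(\prod_{i \notin I}[t_i^{u_i}]\,\tfrac{t_i^{v_i}}{(1-t_i)^{f_i}}\Bigr) \cdot \prod_{i \in I} R_i, \]
where $R_i$ is the univariate restriction $\sum_{a \ge 0}\bigl([t_i^{u_i+a}]\,\tfrac{t_i^{v_i}}{(1-t_i)^{f_i}}\bigr)\,t_i^a$. The scalar prefactor is a nonnegative integer, and a product of Hilbert decompositions in disjoint sets of variables is again a Hilbert decomposition, so it suffices to show that each $R_i$ admits a Hilbert decomposition in the single variable $t_i$.

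For this one-variable question, split on the sign of $v_i - u_i$. If $v_i \ge u_i$, a direct reindexing yields $R_i = t_i^{v_i - u_i}/(1-t_i)^{f_i}$, already in the required form. If $v_i < u_i$, set $s \defa u_i - v_i > 0$ and apply Vandermonde's convolution
\[ \binom{a + s + f_i - 1}{f_i - 1} \;=\; \sum_{k=0}^{f_i-1}\binom{s + f_i - 1}{f_i - 1 - k}\binom{a}{k} \]
together with $\sum_{a \ge 0}\binom{a}{k}\,t_i^a = t_i^k/(1-t_i)^{k+1}$ to obtain
\[ R_i \;=\; \sum_{k=0}^{f_i-1}\binom{s + f_i - 1}{f_i - 1 - k}\,\frac{t_i^k}{(1-t_i)^{k+1}}, \]
a Hilbert decomposition with manifestly nonnegative numerators. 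The main (though mild) obstacle is spotting the right binomial identity in the univariate step; once it is in hand, the other parts are purely formal.
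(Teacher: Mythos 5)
Your proof is correct, and it takes a genuinely different route from the paper. The paper's argument is module-theoretic: starting from a finitely generated $R$-module $M$ with $H_M = H$ (which exists by \Cref{thm:hdec}), it forms $M' := \bigoplus_{\ab \in \ub + \NN^I} M_\ab$, observes that $M'$ is a finitely generated module over $R' := \kk[X_{ij} : i \in I]$ with Hilbert series $\tb^\ub H|_{I,\ub}$, and invokes \Cref{thm:hdec} again. Your proof works directly and combinatorially with the decomposition itself: reduce by linearity to a single term $\tb^\vb/\prod_i(1-t_i)^{f_i}$, exploit that the unit-vector grading makes this factor across coordinates, and resolve the one nontrivial univariate case ($v_i < u_i$) with a Vandermonde convolution. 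The paper's approach is shorter and conceptually cleaner, though it silently uses that $M'$ is finitely generated over $R'$ (true, but it deserves a line — e.g.\ realize $M'$ as a quotient of the submodule $\bigoplus_{\ab \geq \ub} M_\ab$ of $M$). Your approach is more elementary and self-contained, and it produces the restricted Hilbert decomposition in closed form with explicit denominator exponents $k+1 \le f_i \le m_i$; it would be worth stating that last bound explicitly, since that is what guarantees the product $\prod_{i\in I} R_i$ is a legitimate Hilbert decomposition over $R'$ rather than merely a rational expression with nonnegative numerator.
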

\begin{proof}
	If $H$ has a Hilbert decomposition, then there exists a finitely generated graded $R$-module $M = \bigoplus_\ab M_\ab$ with $H = H_M$.
	Set
	\[ M' := \bigoplus_{\ab \in \ub + \NN^I} M_\ab.\]
	This is a module over 
	$R' := \kk[X_{ij} \with i \in I] \subseteq R$
	in a natural way. We give it the structure of an $R$-module by letting the other variables act as zero.
	Its Hilbert series equals $\tb^\ub H|_{I, \ub}$, hence this series has a Hilbert decomposition. 
	But then $H|_{I, \ub}$ clearly has a Hilbert decomposition as well.
\end{proof}

Next, we show that polynomials with positive extremal coefficients admit a certain decomposition. This is the key step in our proof of \Cref{thm:hilberts}.
\begin{proposition}\label{prop:binomial}
	For a polynomial $p \in \QQ[Z_1, \dots, Z_n]$, the following statements are equivalent:
	\begin{enumerate}
		\item $p$ has positive extremal coefficients.
		\item $p$ can be written as follows:
		\[ p = \sum_{(\ab, \rb) \in \Omega} c_{(\ab,\rb)} \binom{Z_1 + r_{1} - a_{1}}{r_{1}}\dotsm \binom{Z_n + r_{n} - a_{n}}{r_{n}}\]
		for some finite set $\Omega \subseteq \ZZ^n \times \NN^n$ and $c_{(\ab,\rb)} > 0$ for all $(\ab,\rb)\in \Omega$.
	\end{enumerate}
	If, in addition, there exists a $\ub \in \ZZ^n$ such that $p(\ab) \in \ZZ$ for all $\ab \geq \ub$, then the coefficients $c_{(\ab,\rb)}$ can be chosen to be natural numbers. 
\end{proposition}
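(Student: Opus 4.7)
For $(2) \Rightarrow (1)$, suppose $p = \sum_{(\ab, \rb) \in \Omega} c_{(\ab, \rb)} \prod_{i=1}^n \binom{Z_i + r_i - a_i}{r_i}$ with every $c_{(\ab, \rb)} > 0$, and let $Z^{\mathbf{s}}$ be an extremal monomial of $p$. I would establish a sub-claim: no $(\ab, \rb) \in \Omega$ has $\rb > \mathbf{s}$ coordinate-wise. For if such a pair existed, pick a coordinate-wise maximal element $\mathbf{r}^*$ of the second coordinates $\{\rb' : (\ab', \rb') \in \Omega\}$ with $\mathbf{r}^* \geq \rb$; then $\mathbf{r}^* > \mathbf{s}$, and only summands indexed by $(\ab', \mathbf{r}^*)$ contribute to $[Z^{\mathbf{r}^*}] p$, each contributing $c_{(\ab', \mathbf{r}^*)}/\prod_i r_i^*! > 0$. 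Hence $[Z^{\mathbf{r}^*}] p > 0$, so $\mathbf{r}^*$ lies in the support of $p$, contradicting the maximality of $\mathbf{s}$. With the sub-claim in hand, only summands with $\rb = \mathbf{s}$ contribute to $[Z^{\mathbf{s}}] p$, yielding $[Z^{\mathbf{s}}] p = \sum_{\ab} c_{(\ab, \mathbf{s})}/\prod_i s_i! > 0$.

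For $(1) \Rightarrow (2)$, I would use strong induction on a well-founded complexity, for instance the pair $(\gb, |\operatorname{supp} p|)$ ordered lexicographically, where $\gb := \bigvee_{\mathbf{s} \in \operatorname{supp} p} \mathbf{s} \in \NN^n$. The base $p = 0$ is immediate. In the step, pick an extremal monomial $Z^{\mathbf{s}}$ of $p$ with $c_{\mathbf{s}} > 0$ and set
\[ B_{\ab} := c_{\mathbf{s}} \cdot s_1! \cdots s_n! \cdot \prod_{i=1}^n \binom{Z_i + s_i - a_i}{s_i} \]
for a shift $\ab \in \ZZ^n$ to be determined. Then $[Z^{\mathbf{s}}](p - B_{\ab}) = 0$, the support of $p - B_{\ab}$ is contained in $(\operatorname{supp} p \setminus \{\mathbf{s}\}) \cup \down{\mathbf{s}}$, and the $\gb$-component of the complexity does not grow.

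The crux is to choose $\ab$ so that the residual $p' := p - B_{\ab}$ again has positive extremal coefficients, enabling the induction. Any new extremal monomial $Z^{\mathbf{t}}$ of $p'$ that was not already extremal in $p$ must satisfy $\mathbf{t} < \mathbf{s}$ strictly, since the other extremals of $p$ are incomparable to $\mathbf{s}$ and hence unaltered. For such a new extremal, its coefficient in $p'$ takes the form $[Z^{\mathbf{t}}] p - c_{\mathbf{s}} \prod_i s_i! \cdot \prod_i \phi_{s_i, t_i}(a_i)$, where $\phi_{s, t}(a) := [Z^t] \binom{Z + s - a}{s}$ is a polynomial in $a$ of degree $s - t$ with leading term $(-1)^{s-t}\binom{s}{t} a^{s-t}/s!$. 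One then argues that by selecting each $a_i$ sufficiently large (with parity adjustments as needed) all these residual coefficients can be forced positive. The one-variable model is the clean observation that for $p(Z)$ of degree $d$ with leading coefficient $c_d > 0$, the subtraction $p - c_d \cdot d! \binom{Z + d - a}{d}$ has degree $d - 1$ and $[Z^{d-1}]$-coefficient equal to $c_{d-1} + c_d d a - c_d d(d+1)/2$, which is positive for $a$ large, so the induction closes on degree.

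The main obstacle will be synchronizing the choice of $\ab$ so that all new extremal coefficients of $p'$ come out positive simultaneously; the natural device is to process the extremal monomials of $p$ one at a time, exploiting pairwise incomparability so that a subtraction aimed at one extremal does not disturb the others. For the integrality addendum, if $p$ is integer-valued on a cone $\{\ab \geq \ub\}$, then $c_{\mathbf{s}} \cdot s_1! \cdots s_n!$ is a natural number (standard from the integer-valuedness of the multivariate binomial basis), so $B_{\ab}$ carries a positive integer scalar, and $p'$ remains integer-valued on a possibly shifted cone, permitting the induction to deliver natural number coefficients throughout.
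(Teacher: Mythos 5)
Your approach is the same as the paper's for all three parts: for $(2)\Rightarrow(1)$ you formalize the no-cancellation observation with the sub-claim about a maximal $\mathbf{r}^*$, for $(1)\Rightarrow(2)$ you peel off one extremal monomial at a time by subtracting a binomial product with a large shift, and the integrality addendum invokes the same facts about integer-valued polynomials. (Incidentally, you have the scalar right -- it must be $c_{\mathbf{s}}\,s_1!\cdots s_n!$ so that the $Z^{\mathbf{s}}$-terms actually cancel; the paper's $c/(r_1!\cdots r_n!)$ at that spot is a typo.)

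There is, however, a genuine gap in your termination argument. The lexicographic complexity $(\gb,\,|\operatorname{supp} p|)$ need not strictly decrease. Take $p=Z_1 Z_2$: then $\gb=(1,1)$ and $|\operatorname{supp} p|=1$, and after subtracting $B_\ab=(Z_1+1-a_1)(Z_2+1-a_2)$ with $a_1,a_2$ large you get $p'=(a_2-1)Z_1+(a_1-1)Z_2-(1-a_1)(1-a_2)$, whose support is $\{(1,0),(0,1),(0,0)\}$, so $\gb'=(1,1)$ is unchanged while $|\operatorname{supp} p'|=3$ has \emph{increased}. A measure that does strictly decrease is the cardinality of the down-set $D(p):=\{\mathbf{t}\in\NN^n \with \mathbf{t}\leq \mathbf{s}\ \text{for some}\ \mathbf{s}\in\operatorname{supp} p\}$: the extremal $\mathbf{s}$ is removed and never reintroduced (all monomials of $B_\ab$ are $\leq\mathbf{s}$, and no element of $\operatorname{supp} p$ other than $\mathbf{s}$ is $\geq\mathbf{s}$ since $\mathbf{s}$ is extremal), while everything else in $\operatorname{supp}(p-B_\ab)$ was already in $D(p)$, so $|D(p-B_\ab)|<|D(p)|$. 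Separately, the synchronization and parity worries you raise can be dispensed with: once $a_i$ is large, each $Z^{\mathbf{s}}/Z_i$ gets coefficient $[Z^{\mathbf{s}}/Z_i]p + c_{\mathbf{s}}\,s_i\bigl(a_i-\tfrac{s_i+1}{2}\bigr)$, an affine function of $a_i$ alone with positive slope, so all $n$ of these can be made positive by independent choices; and then every $Z^{\mathbf{s}}/Z_i$ lies in the support, so any $\mathbf{t}<\mathbf{s}$ divides one of them and cannot be extremal -- no other new extremals arise, and no parity case analysis is needed.
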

\begin{proof}
	We start with the implication (1) $\Rightarrow$ (2).
	Let $\Zb^\rb$, for $\rb \in \NN^n$, 
	be an extremal monomial of $p$, and let $c$ be its coefficient.
	For $\ab \in \ZZ^n$ consider
	\[ Q := \binom{Z_1+r_1-a_1}{r_1} \dotsm \binom{Z_n+r_n-a_n}{r_n}. \]
	It is easy to see that $\Zb^\rb$ is also an extremal monomial of $Q$, and in fact it is its only extremal monomial. Further, its coefficient $1/r_1!\dotsm r_n! > 0$.
	Hence, the corresponding terms cancel in 
	\[ p_1 := p - \frac{c}{r_1!\dotsm r_n!} Q.\]
	We show that $p_1$ still satisfies the hypothesis (1), so the claim follows by induction.
	For this, note that the only possible new extremal monomials of $p_1$ are the monomials $\Zb^\rb / Z_i$ 
	for $1 \leq i \leq n$, so we need to compute their coefficients in $Q$.
	We start with one factor of $Q$:
	\begin{align*}
		\binom{Z + r - a}{r} &= \frac{1}{r!}(Z+r-a)(Z-r-a-1)\dotsm(Z+1-a)\\
		&= \frac{1}{r!}\left( Z^r + \left(\sum_{\ell=0}^{r-1}r-a-\ell\right) Z^{r-1} + \dotsb \right)\\
		&= \frac{1}{r!}\left( Z^r + r\left(\frac{r+1}{2}-a\right) Z^{r-1} + \dotsb \right).
	\end{align*}
	This implies that
	\[
	\begin{split}
	Q = \frac{1}{r_1!\dotsm r_n!}\left( \Zb^\rb 
	+ \sum_{i=1}^n r_i\left(\frac{r_i+1}{2}-a_i\right) \frac{\Zb^\rb}{Z_i}
	+\text{lower terms}\right).
	\end{split}
	\]
	Note that for a sufficiently large choice of $a_1, \dotsc, a_n$, the coefficients of $\Zb^\rb / Z_i$ 
	become arbitrarily negative. Hence, for large $\ab$, $p_1$ still satisfies the hypothesis.
	
	For the other implication, note that if $p$ can be written as in (2), then there can be no cancellation between the extremal monomials on the right-hand side. So the coefficients of the extremal monomials of $p$ are (sums of) multiples of the $c_i$, and thus positive.
	
	Finally, assume that $p(\ab) \in \ZZ$ for all $\ab \in \ZZ^n$ which are greater than some fixed $\ub$.
	We first note that this implies that $p(\ab) \in \ZZ$ for all $\ab \in \ZZ^n$, cf. Corollary I.1.2 and Corollary XI.1.5 in Cahen and Chabert \cite{CCbook}.
	So by a classical result of Ostrowski \cite{ostrowski} (see also \cite[Corollary XI.1.11]{CCbook}), $p$ can be written as an integral linear combination of polynomials of the form 
	\[ H_{k_1, \dots, k_n} := \binom{Z_1}{k_1}\dotsm\binom{Z_n}{k_n} \]
	with $k_1, \dotsc, k_n \in \NN$.
	If $\Zb^\rb$ is an extremal monomial of $p$, then only $H_{r_1, \dots, r_n}$ contributes to this term, so its coefficient $c$ is a multiple of the corresponding coefficient in $H_{r_1, \dots, r_n}$, which is $1/r_1!\dotsm r_n!$.
	It now follows from the construction above that the $c_{(\ab,\rb)}$ are positive integers.
\end{proof}

\begin{proof}[Proof of \Cref{thm:hilberts}]
	We start with the necessity:
	If $H$ has a Hilbert decomposition, then so does every restriction of $H$.
	Hence we only need to consider the case $I = [n]$ and thus we need to show that $\HP{H}$ has positive extremal coefficients.
	Consider a Hilbert decomposition
	\[ H = \sum_{(\ab, \eb) \in \Omega} \frac{c_{(\ab,\eb)} \tb^\ab}{(1-\tb)^\eb} \]
	of $H$, where $\Omega \subseteq \ZZ^n \times \NN^n$ is a suitable finite index set and $c_{(\ab,\eb)} \geq 0$ for all $(\ab, \eb) \in \Omega$.
	Expanding every summand into a series, it follows that
	\[ \HP{H} = \sum_{(\ab, \eb) \in \Omega} c_{(\ab,\eb)} \binom{Z_1 + e_{1} - a_{1}-1}{e_{1}-1}\dotsm \binom{Z_n + e_{n} - a_{n}-1}{e_{n}-1}. \]
	Hence $\HP{H}$ has positive extremal coefficients by \Cref{prop:binomial}.
	\medskip
	
	Now we turn to the sufficiency:
	We proceed by induction over the number of variables of $H$, with the base case being trivial.
	First, assume that $\HP{H} \neq 0$.
	By assumption, its extremal coefficients are nonnegative, so \Cref{prop:binomial} yields a decomposition
	\[
	\HP{H} = \sum_{(\ab,\rb)\in\Omega} c_{(\ab,\rb)} \binom{Z_1 + r_{1} - a_{1}}{r_{1}}\dotsm \binom{Z_n + r_{n} - a_{n}}{r_{n}}
	\]
	for a finite set $\Omega \in \ZZ^n\times \NN^n$ and $c_{(\ab,\rb)} > 0$ for all $(\ab,\rb)\in\Omega$.
	Moreover, $\HP{H}$ takes integer values on large $\ab \gg 0$, so $c_{(\ab,\rb)} \in \NN$ for all $(\ab,\rb)\in\Omega$.
	Let 
	\[ H_1 := \sum_{(\ab,\rb)\in\Omega} \frac{c_{(\ab,\rb)}\tb^{\ab}}{\prod_i (1-t_i)^{r_i+1}}. \]
	It is easy to see that $\HP{H_1} = \HP{H}$, so there exists a $\gb \in \ZZ^n$ such that $\coeff{H}{\ab} = \coeff{H_1}{\ab}$ for all $\ab\geq \gb$. 
	
	Set $H' := H - \tb^\gb H_1|_{[n], \gb}$. It holds that $\HP{H'} = 0$ and we claim that $H'$ still satisfies the hypothesis on the extremal coefficients.
	To see this, consider $I \subseteq [n]$ and $\ub \in \ZZ^{n}$.
	If $\ub + \NN^I \cap \gb + \NN^n = \emptyset$, then $H'|_{I,\ub} = H|_{I,\ub}$;
	otherwise, let $\vb \in \ub + \NN^I \cap \gb + \NN^n$. Then $\coeff{H'}{\ab} = 0$ for all 
	$\ab \geq \vb$ and hence $\HP{H'|_{I,\ub}} = 0$. In both cases, the hypothesis is satisfied.
	
	Next, we consider the case that $\HP{H} = 0$.
	In this case, the exponent vectors of the nonzero terms of $H$ are contained in finitely many translates of coordinate hyperplanes. Hence we may decompose $H$ as a sum of series in $n-1$ variables as follows:
	Choose $\gb \in \ZZ^n$ such that $\coeff{H}{\ab} = 0$ for all $\ab \geq \gb$.
	For $1 \leq i \leq n$ and $0 \leq j \leq g_i-1$ let $\ub(i,j) := (g_1, \dotsc, g_{i-1},j,0,\dotsc,0) \in \ZZ^n$.
	We decompose $H$ as follows:
	\[ H = \sum_{i=1}^n \sum_{j=0}^{g_{i}-1} \tb^{\ub(i,j)} H|_{[n] \setminus \{i\}, \ub(i,j)} \]
	Every restriction of $H$ is a series in at most $n-1$ variables, so the claim follows by induction.
\end{proof}

\subsection{Non-Negative series}
Our next goal is to clarify the relation between series satisfying the hypothesis of \Cref{thm:hilberts} and series which are merely nonnegative.
We will need the following convex geometric lemma.
\begin{lemma}\label{lem:polytope}
	Let $P \subseteq \RR^n$ be a polytope and $v \in P$ a vertex, such that $v + u \notin P$ for all $u \in \RR_{\geq 0}^n, u \neq 0$.
	Then there exists a linear form $\sigma \in (\RR^n)^*$ which attains its maximum over $P$ exactly at $v$ and whose coefficients are nonnegative integers.
\end{lemma}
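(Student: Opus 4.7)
The plan is to work with the tangent cone $C := \mathrm{cone}(P - v) \subseteq \RR^n$ of $P$ at $v$, produce a real linear form that is coordinatewise nonnegative and strictly negative on $C \setminus \{0\}$ via a theorem of the alternative, and finally rationalize and clear denominators.

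Since $v$ is a vertex of $P$, the cone $C$ is polyhedral and pointed. The hypothesis translates directly to $C \cap \RR_{\geq 0}^n = \{0\}$: for nonzero $u = \lambda(p-v)$ in this intersection (with $p \in P$ and $\lambda > 0$), choosing $\mu > 0$ small enough that $\mu\lambda \leq 1$ gives $v + \mu u = (1-\mu\lambda)v + \mu\lambda p \in P$ with $\mu u \in \RR_{\geq 0}^n \setminus \{0\}$, contradicting the assumption. Fix nonzero generators $w_1, \dots, w_k$ of $C$ and let $W$ be the $n \times k$ matrix with these as columns.

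Next, I would apply Gordan's theorem of the alternative to the $(k+n) \times n$ matrix $A = \begin{pmatrix} -W^\top \\ I \end{pmatrix}$. The negative alternative would yield $(\alpha, \beta) \in \RR_{\geq 0}^{k+n}$, not both zero, with $-W\alpha + \beta = 0$; but then $\beta = W\alpha$ belongs to $C \cap \RR_{\geq 0}^n = \{0\}$, so $W\alpha = 0$, which combined with pointedness of $C$ and $w_i \neq 0$ forces $\alpha = 0$ and hence $\beta = 0$, a contradiction. The positive alternative therefore furnishes $\sigma \in \RR^n$ with $A\sigma > 0$ componentwise, that is, $\sigma > 0$ and $\sigma \cdot w_i < 0$ for every $i$.

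Finally, since these are strict inequalities, a sufficiently close rational $\sigma' \in \QQ_{>0}^n$ satisfies them as well, and multiplying by a common denominator gives positive---in particular nonnegative---integer coefficients. As the $w_i$ positively generate $C$, the inequalities $\sigma' \cdot w_i < 0$ extend to $\sigma' \cdot (p - v) < 0$ for every $p \in P \setminus \{v\}$, so $\sigma'$ attains its maximum on $P$ uniquely at $v$. The main obstacle is the alternative-theorem step, which crucially uses both $C \cap \RR_{\geq 0}^n = \{0\}$ and the pointedness of $C$, the latter encoding the vertex hypothesis on $v$.
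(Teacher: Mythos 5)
Your proof is correct, and it takes a genuinely different route from the paper. The paper separates the compact convex set $P' := \mathrm{conv}(\text{vertices of }P\text{ other than }v)$ from the closed convex set $Q := v + \RR_{\geq 0}^n$ by a hyperplane: the hypothesis gives $P' \cap Q = \emptyset$, a strict separating functional $\sigma$ exists, rationality is obtained by a small perturbation, the maximum over $P$ is attained only at $v$ since it is attained among the vertices, and nonnegativity of the coefficients follows because a negative coefficient $\sigma(\eb_i) < 0$ would make $\sigma$ unbounded below on $Q$, violating the separation inequality. You instead pass to the tangent cone $C = \mathrm{cone}(P - v)$, translate the two hypotheses into $C$ being pointed and $C \cap \RR_{\geq 0}^n = \{0\}$, and invoke Gordan's theorem of the alternative on the stacked matrix $\binom{-W^\top}{I}$ to produce a strictly positive $\sigma$ with $\sigma \cdot w_i < 0$ on all generators of $C$; rationalization is again by perturbing strict inequalities. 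Both arguments are at bottom convex-separation statements (Gordan is a Farkas-type alternative), and both use the vertex hypothesis and the orthant hypothesis in parallel ways; the paper's version is shorter and uses only the elementary hyperplane separation theorem, while yours is more explicitly algorithmic in flavor and makes the role of the cone's pointedness visible. One small remark: you correctly required the chosen generators $w_i$ of $C$ to be nonzero, which is what makes pointedness force $W\alpha = 0 \Rightarrow \alpha = 0$; without that hygiene the Gordan step would have a gap.
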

\begin{proof}
	Let $P' \subseteq P$ be the convex hull of all vertices of $P$ which are different from $v$.
	Further, let $Q := v + \RR_{\geq 0}^n$. Both $P'$ and $Q$ are convex sets, and our assumption implies that $P' \cap Q = \emptyset$.
	Then there exists a separating hyperplane, i.e. a linear form $\sigma \in (\RR^n)^*$ such that
	\begin{equation}\label{eq:sep}
		\max(\sigma(p) \with p \in P') < \inf(\sigma(q) \with q \in Q).
	\end{equation}
	We may assume that $\sigma$ has rational coefficients, and after clearing denominators we may even assume that the coefficients of $\sigma$ are integers.
	We show that $\sigma$ has the claimed properties.
	It is clear that the maximum of $\sigma$ over $P$ is attained only at $v$.
	To see that the coefficients of $\sigma$ are nonnegative assume that $\sigma(\eb_i) < 0$ for some unit vector $\eb_i$. Then $\sigma(v + \lambda \eb_i)$ can be arbitrarily negative for large $\lambda \gg 0$, contradicting \eqref{eq:sep}.
\end{proof}

The following lemma shows that if $H \geq 0$ then some extremal coefficients are automatically nonnegative.
\begin{theorem}\label{lem:ecken}
	Let $H \in \ZZ(\!(t_1, \dotsc, t_n)\!)$ be a formal Laurent series,
	such that $H \cdot (1-\tb)^\mb$ is a Laurent polynomial for some $\mb \in \NN^n$.
	The following conditions are equivalent:
	\begin{enumerate}
		\item $H \geq 0$.
		\item For all $\ub \in \ZZ^n$ and $I \subseteq [n]$, every extremal monomial of $\HP{H|_{\ub,I}}$ which is also a vertex of its Newton polytope has a positive coefficient.
	\end{enumerate}
\end{theorem}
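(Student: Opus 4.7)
The plan is to prove the two implications separately: (2)$\Rightarrow$(1) is a formal consequence of specialising condition~(2) to $I = \emptyset$, whereas (1)$\Rightarrow$(2) leans on the polyhedral input of \Cref{lem:polytope} together with an asymptotic substitution argument. For the reverse direction, fix any $\ub \in \ZZ^n$ and apply condition~(2) with $I = \emptyset$. Then $H|_{\emptyset,\ub}$ is the integer $\coeff{H}{\ub}$, and its Hilbert polynomial, viewed as an element of $\ZZ[Z_i \mid i \in \emptyset] = \ZZ$, is this same integer. Its only monomial~$1$ is vacuously extremal and is the unique vertex of the singleton Newton polytope $\{\mathbf{0}\}$, so condition~(2) forces $\coeff{H}{\ub}$ to be positive whenever it is non-zero. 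Hence $\coeff{H}{\ub} \geq 0$ for every $\ub$, i.e.\ $H \geq 0$.

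For (1)$\Rightarrow$(2), fix $\ub \in \ZZ^n$ and $I \subseteq [n]$, and set $q := \HP{H|_{I,\ub}}$. Since $H|_{I,\ub}$ inherits non-negativity from $H$, \Cref{lem:HPoly} furnishes a point $\vb_0 \in \NN^I$ (which may be chosen with non-negative entries) such that $q(\ab) = \coeff{H|_{I,\ub}}{\ab} \geq 0$ for every $\ab \in \NN^I$ with $\ab \geq \vb_0$. Let $\Zb^\rb$ be a monomial of $q$ that is both extremal and a vertex of the Newton polytope $P$ of $q$, and denote its coefficient by $c_\rb$; the goal is $c_\rb > 0$. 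The two hypotheses on $\rb$ together force $\rb + u \notin P$ for every $u \in \RR^I_{\geq 0} \setminus \{0\}$, which is the precise hypothesis of \Cref{lem:polytope}. That lemma then produces a linear form $\sigma$ on $\RR^I$ with non-negative integer coefficients whose maximum on $P$ is attained uniquely at $\rb$.

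With $\sigma$ in hand, for every integer $\lambda \geq 1$ set $\ab(\lambda) := \vb_0 + (\lambda^{\sigma_i})_{i \in I}$. Then $\ab(\lambda) \in \NN^I$ and $\ab(\lambda) \geq \vb_0$, so $q(\ab(\lambda)) \geq 0$. Evaluating a monomial $c_{\rb'} \Zb^{\rb'}$ of $q$ at $\ab(\lambda)$ gives, as $\lambda \to \infty$,
\[
c_{\rb'} \cdot \lambda^{\sigma \cdot \rb'} \cdot \prod_{\substack{i \in I \\ \sigma_i = 0}} (v_{0,i} + 1)^{r'_i} \bigl( 1 + o(1) \bigr).
\]
Strict $\sigma$-maximality of $\rb$ ensures that only $\Zb^\rb$ contributes to the top power $\lambda^{\sigma \cdot \rb}$; dividing $q(\ab(\lambda))$ by $\lambda^{\sigma \cdot \rb}$ and letting $\lambda \to \infty$ forces $c_\rb \prod_{\sigma_i = 0}(v_{0,i} + 1)^{r_i} \geq 0$. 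The second factor is strictly positive, and $c_\rb \neq 0$ since $\Zb^\rb$ is by assumption a monomial of $q$, so $c_\rb > 0$ as required.

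The main obstacle is the polyhedral claim embedded in the second paragraph: showing that an extremal monomial of $q$ whose exponent is a vertex of $P$ cannot be brought into $P$ by any non-zero positive-orthant translation, which is exactly the condition that allows \Cref{lem:polytope} to be invoked. Once this combinatorial fact about Newton polytopes is in place, the asymptotic analysis via the substitution $Z_i = v_{0,i} + \lambda^{\sigma_i}$ is essentially bookkeeping, and the reverse direction is the one-line specialisation of condition~(2) to $I = \emptyset$ given at the beginning.
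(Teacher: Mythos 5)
Your proof follows the paper's own strategy: specialise to $I=\emptyset$ for (2)$\Rightarrow$(1), and for (1)$\Rightarrow$(2) use \Cref{lem:polytope} to obtain a linear form $\sigma$ with nonnegative integer coefficients that is maximised on the Newton polytope $P$ uniquely at $\rb$, then extract the coefficient $c_\rb$ by an asymptotic/leading-term argument along $\sigma$. Your asymptotic step is in fact a careful reconstruction of the paper's substitution $Z_i \mapsto Z^{\sigma_i}$, and the shift by $\vb_0$ is a genuine improvement, since it handles coordinates with $\sigma_i = 0$ which the paper's $\hat\sigma$ silently glosses over. However, you correctly identify ``the main obstacle'': one must check that an extremal vertex $\rb$ satisfies the hypothesis of \Cref{lem:polytope}, namely $\rb + u \notin P$ for all $u \in \RR^n_{\geq 0}\setminus\{0\}$. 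You declare this step necessary but leave it unproved, and that is where the argument breaks.

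The trouble is that extremality plus being a vertex does \emph{not} imply this Pareto condition. Take $q = -Z_1^2 Z_2 + Z_1^5 + Z_2^5$. Its Newton polytope is the (nondegenerate) triangle with vertices $(2,1),(5,0),(0,5)$, so $(2,1)$ is a vertex; $Z_1^2Z_2$ divides neither $Z_1^5$ nor $Z_2^5$, so it is extremal. Yet
$(3,1) = \tfrac12(2,1) + \tfrac25(5,0) + \tfrac1{10}(0,5) \in P$,
so $(2,1)+(1,0) \in P$, and indeed no linear form with nonnegative coefficients is maximised uniquely at $(2,1)$ (one would need both $\sigma_2 > 3\sigma_1$ and $\sigma_1 > 2\sigma_2$). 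So \Cref{lem:polytope} genuinely cannot be invoked, and the asymptotic ``bookkeeping'' has nothing to run on. Worse, since $q(a,b) \geq 0$ for all $a,b\in\NN$ (if $a\geq b\geq 1$ then $a^5 \geq a^2b$, and symmetrically), the series $H := \sum_{a,b\geq 0} q(a,b)\,t_1^a t_2^b$ satisfies condition (1) and the rationality hypothesis with $\mb = (6,6)$, but violates condition (2) for $\ub=(0,0)$, $I=\{1,2\}$, as the extremal vertex $Z_1^2 Z_2$ has coefficient $-1$. This indicates the statement as printed is too strong; the paper's own proof makes the same unjustified appeal to \Cref{lem:polytope}. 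The condition that actually licences \Cref{lem:polytope} is that $\rb$ be a vertex with $(\rb + \RR^n_{\geq 0}) \cap P = \{\rb\}$, which is strictly stronger than ``extremal and a vertex.'' (In the squarefree setting of \Cref{cor:e2} the two conditions do coincide, so that corollary survives.) In short: your reverse direction and your asymptotics are sound, but the polyhedral step you rightly singled out is not merely missing — it is false, and the theorem needs to be restated before either your argument or the paper's can be completed.
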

\begin{proof}
	The implication (2) $\Rightarrow$ (1) is clear, because $\coeff{H}{\ab} = \HP{H|_{\ab, \emptyset}}$ for all $\ab \in \ZZ^n$.
	So we only need to show the other implication.
	
	Let $\Zb^\rb$ be an extremal monomial of $p := \HP{H|_{\ub,I}}$ which is also a vertex of its Newton polytope.
	By \Cref{lem:polytope}, there exists a linear form $\sigma \in (\RR^n)^*$ with nonnegative integral coefficients, which attains its maximum over the Newton polytope exactly at $\rb$.
	Consider the linear map $\hat{\sigma}: \ZZ[Z_1, \dotsc, Z_n] \to \ZZ[Z]$, given by $\Zb^\rb \mapsto Z^{\sigma(\rb)}$.
	Then $\hat{\sigma}(p)$ is a univariate polynomial, which attains nonnegative values at sufficiently large integers.
	Hence its leading coefficient is nonnegative.
	On the other hand, by our choice of $\sigma$, this leading coefficient of $\hat{\sigma}(p)$ equals the coefficient of $\Zb^\rb$ in $p$.
	So the claim is proven.
\end{proof}

\subsection{The fine graded case}

If there are at most two variables with the same degree, then the obvious necessary conditions for Hilbert series are also sufficient. This includes in particular the case of the fine graded polynomial ring.
\begin{corollary}\label{cor:e2}
	In the situation of \Cref{thm:hilberts}, assume that $m_i \leq 2$ for all $i$.
	
	Then the following two statements are equivalent for a formal Laurent series $H \in \ZZ(\!(t_1, \dotsc, t_n)\!)$:
	\begin{enumerate}
		\item There exists a finitely generated graded $S$-module $M$ whose Hilbert series equals $H$.
		\item $H$ satisfies the following two conditions:
		\begin{enumerate}
			\item $H \geq 0$, and
			\item $H \cdot \prod_{i=1}^n (1-t_i)^{m_i}$ is a polynomial.
		\end{enumerate}
	\end{enumerate}
\end{corollary}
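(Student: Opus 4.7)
The plan is to verify conditions (2)(a) and (2)(b) of Theorem \ref{thm:hilberts}. Condition (2)(a) is exactly the polynomial denominator hypothesis, so the work lies in showing condition (2)(b): for every $I \subseteq [n]$ and $\ub \in \ZZ^n$, the Hilbert polynomial $\HP{H|_{I,\ub}}$ has positive extremal coefficients.

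The key reduction I would make is that the assumption $m_i \leq 2$ forces every such restriction Hilbert polynomial to be multilinear. Since $H \cdot \prod_i (1-t_i)^{m_i}$ is a Laurent polynomial and the restriction operation preserves the denominator in the variables $t_i$ with $i \in I$, Lemma \ref{lem:HPoly} yields $\deg_{Z_i} \HP{H|_{I,\ub}} \leq m_i - 1 \leq 1$ for each $i \in I$. Hence every monomial appearing in $\HP{H|_{I,\ub}}$ has a $0/1$ exponent vector.

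At this point I would invoke the elementary geometric fact that every $0/1$ vector is an extreme point of the unit cube $[0,1]^I$, and therefore a vertex of the convex hull of any subset of $\{0,1\}^I$ containing it. Consequently every monomial of the multilinear polynomial $\HP{H|_{I,\ub}}$---in particular every extremal one---corresponds to a vertex of its Newton polytope. Theorem \ref{lem:ecken} then closes the argument: since $H \geq 0$, every extremal monomial that is also a Newton-polytope vertex has positive coefficient, so every extremal monomial of $\HP{H|_{I,\ub}}$ has positive coefficient, and condition (2)(b) of Theorem \ref{thm:hilberts} is satisfied.

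The only mildly delicate bookkeeping step is verifying that the denominator condition on $H$ transfers to each restriction $H|_{I,\ub}$; once this is in place, the multilinearity makes the coincidence between extremal monomials and Newton-polytope vertices automatic. The bound $m_i \leq 2$ is sharp here: Example \ref{ex:nonh} with $\mb = (3,3)$ produces the Hilbert polynomial $(i-j)^2$, whose extremal monomial $ij$ lies strictly inside the Newton polytope on the segment between $i^2$ and $j^2$, so it escapes the conclusion of Theorem \ref{lem:ecken} and can have a negative coefficient even when $H$ is nonnegative.
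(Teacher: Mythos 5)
Your proof is correct and follows essentially the same route as the paper: use $m_i \leq 2$ and the degree bound from Lemma \ref{lem:HPoly} to force multilinearity of every restricted Hilbert polynomial, observe that every $0/1$ exponent vector is automatically a vertex of the Newton polytope, and then invoke Theorem \ref{lem:ecken}. Your observation about sharpness via Example \ref{ex:nonh} is a nice addition, though not strictly part of the proof.
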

\begin{proof}
	For $I \subseteq [n]$ and $\ub \in \ZZ^n$ let $p = \HP{H|_{I,\ub}}$.
	The hypothesis that $m_i \leq 2$ for all $i$ implies that every monomial of $p$ is squarefree.
	Hence its Newton polytope is a 0/1-polytope, so every lattice point in it is a vertex.
	In particular, all extremal monomials of $p$ are vertices of its Newton polytope, so the claim follows from \Cref{lem:ecken}.
\end{proof}

\section{General inequalities for the Hilbert depth}
In this section, we present a class of linear inequalities for the Hilbert series of modules with a given depth.
We relax our assumptions on $R$ and allow it to
be an arbitrary (commutative) $\kk$-algebra with a positive $\ZZ^n$-grading, such that $\dim_\kk R_0 < \infty$.
The general idea is to compare the Hilbert series in question with \emph{all} Hilbert series of modules from a certain class.
Recall that an $R$-module $N$ is called \emph{$p$-th syzygy module} if it can be realized as the $p$-th syzygy of some $R$-module $N'$.
See \cite{B} for alternative characterizations of syzygy modules.
\medskip

The following is the main result of this section.
\TorUngl
\begin{proof}
	First note that every free $R$-module $F$ satisfies $H_{F \otimes N} = \frac{H_F H_N}{H_R}$.
	This clearly holds for $F = R(a)$ with $a \in \ZZ^n$, and it is easily seen that the equality is preserved under direct sums.
	
	Consider a free resolution of $M$:
	\[ \FF: 0 \to F_p \to \dotsb \to F_0 \to 0. \]
	We compute that
	\[ \sum_{i \geq 0} (-1)^i H_{F_i \otimes N} = \sum_{i \geq 0} (-1)^i \frac{H_{F_i} H_N}{H_R} = \frac{H_N}{H_R}  \sum_{i \geq 0} (-1)^i H_{F_i} = \frac{ H_N}{H_R} H_M. \]
	On the other hand, it holds that
	\[ \sum_{i \geq 0} (-1)^i H_{F_i \otimes N} =  \sum_{i \geq 0} (-1)^i H_{H_i(\FF \otimes N)} = \sum_{i \geq 0} (-1)^i H_{\Tor_i(M,N)}. \]
	Let $N'$ be an $R$-module such that $N$ is the $p$-th syzygy module of $N'$.
	Then it holds for $i > 0$ that $\Tor^R_i(M,N) = \Tor^R_{i+p}(M, N') = 0$
	because $\pdim M \leq p$.
	We conclude that
	\[ \frac{H_M H_N}{H_R} = \sum_{i \geq 0} (-1)^i H_{\Tor_i(M,N)} = H_{M \otimes N} \geq 0. \qedhere\]
\end{proof}

\begin{remark}
	Let us consider some extremal cases of this theorem in the case that $R$ is the polynomial ring (with an arbitrary $\ZZ^n$-grading).
	\begin{enumerate}[(1)]
		\item If $p = \dim R$, all $p$-th syzygies modules are free. Hence \eqref{eq:allgungl} reduces to the statement that $H_M \geq 0$ for every $R$-module $M$.
		\item For $p = 0$, every module with $\pdim M \leq p$ is free. On the other hand, let $M$ be a module satisfying the inequality \eqref{eq:allgungl} for all $0$-th syzygy modules $N$.
		Choosing $N = \kk = R / (X_1, \dotsc, X_m)$ yields that $H_M$ is of the form
		\[ \frac{Q}{\prod_{i=1}^n (1-t_i)} \]
		for some $Q \in \NN[t_1^{\pm 1}, \dotsc, t_n^{\pm 1}]$. If $Q = \sum_{\ab \in \ZZ^n} q_{\ab} \tb^{\ab}$, 
		then the free module 
		\[ M' := \bigoplus_{\ab \in \ZZ^n} R(-\ab)^{q_{\ab}} \]
		has the same Hilbert series as $M$.
		Hence, \eqref{eq:allgungl} exactly describes the Hilbert series of free modules. 
	\end{enumerate}
\end{remark}

In general, the inequalities \eqref{eq:allgungl} are not sufficient for a Hilbert series to have a given Hilbert depth.
Nevertheless, in the next two sections, we consider two special situations where slightly stronger inequalities are indeed sufficient.

For later use, we also record a useful criterion for Hilbert series of modules of positive depth.
	\begin{proposition}\label{prop:hdep}
		Let $R$ be a $\ZZ^n$-graded polynomial ring, such that every variable is homogeneous.
		Then a formal Laurent series $H \in \ZZ(\!(t_1, \dotsc, t_n)\!)$ is the Hilbert series of a finitely generated $R$-module of positive depth if and only if it can be written in the form
		\begin{equation}\label{eq:hdec0}
		H = \sum_{\substack{I \subseteq [m]\\I \neq \emptyset}} \frac{Q_I}{\prod_{i \in I}(1-\tb^{\deg X_i})}
		\end{equation}
		for Laurent polynomials $Q_I \in \ZZ[t_1^{\pm 1}, \dotsc, t_n^{\pm 1}], I \subseteq [m]$ with nonnegative coefficients.
	\end{proposition}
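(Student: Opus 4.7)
My plan is to handle the two implications separately.

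For the \emph{sufficiency} direction I would construct $M$ explicitly. Writing $Q_I = \sum_\ab q_{I,\ab} \tb^\ab$ with $q_{I,\ab} \in \NN$ and setting $P_I := (X_j : j \notin I)$ for each nonempty $I \subseteq [m]$, take
\[
M := \bigoplus_{\emptyset \neq I \subseteq [m]} \bigoplus_{\ab \in \ZZ^n} (R/P_I)(-\ab)^{q_{I,\ab}}.
\]
Each summand $R/P_I \cong \kk[X_i : i \in I]$ is Cohen--Macaulay of depth $|I| \geq 1$ as an $R$-module, since the variables indexed by $I$ form a regular sequence on it; hence $\dep M \geq 1$ and $H_M = H$ by construction.

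For the \emph{necessity} direction, suppose $H = H_M$ with $\dep M \geq 1$. My strategy is to produce a graded prime filtration
\[
0 = M_0 \subsetneq M_1 \subsetneq \cdots \subsetneq M_r = M, \qquad M_k/M_{k-1} \cong (R/P_k)(-\ab_k),
\]
with the extra property that every $P_k$ is distinct from the irrelevant maximal ideal $\mm$. Because the graded primes of $R$ are precisely the ideals $(X_j : j \notin I)$ for $I \subseteq [m]$, the condition $P_k \neq \mm$ is equivalent to $I_k := \{i : X_i \notin P_k\}$ being nonempty; collecting the resulting contributions $\tb^{\ab_k}/\prod_{i \in I_k}(1 - \tb^{\deg X_i})$ by index set then produces a decomposition of the form \eqref{eq:hdec0}. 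This mirrors the route by which \Cref{thm:hdec} is derived from a prime filtration in \cite{MU}, with the stronger conclusion encoded in the requirement $P_k \neq \mm$.

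I would build the filtration inductively. The initial step is unproblematic because $\dep M \geq 1 \Leftrightarrow \mm \notin \mathrm{Ass}(M)$: any choice $P_1 \in \mathrm{Ass}(M)$ automatically satisfies $P_1 \neq \mm$, and one embeds $M_1 \cong (R/P_1)(-\ab_1) \hookrightarrow M$. The \emph{main obstacle}, and the technical heart of the argument, is that $\mm$ may still become associated to some later quotient $M/M_{k-1}$, which would naively force a $\kk$-factor into the filtration. My plan to handle this is the following observation: if $\overline{m} \in M/M_{k-1}$ is annihilated by $\mm$, then a lift $m \in M$ cannot itself be annihilated by $\mm$ (because $M$ has trivial socle, by $\dep M \geq 1$), so $X_j m \neq 0$ in $M$ for some variable $X_j$; yet $X_j m \in M_{k-1}$. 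Using this nonzero image to guide the choice of $M_k$, I would enlarge the next step so that $M_k/M_{k-1}$ is cyclic with non-maximal annihilator, absorbing the would-be $\kk$-factor. The delicate part will be verifying that these absorption moves can be performed consistently at every stage --- so that the induction terminates (by Noetherianity) and delivers a prime filtration free of $\mm$-factors --- which is where the bulk of the bookkeeping work will lie.
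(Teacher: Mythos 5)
The \emph{sufficiency} direction is correct and is the expected construction: the direct sum $M := \bigoplus_{\emptyset\neq I}\bigoplus_{\ab}(R/P_I)(-\ab)^{q_{I,\ab}}$ has the required Hilbert series, and since each $R/P_I\cong\kk[X_i:i\in I]$ has depth $|I|\geq 1$ over $R$ and depth of a finite direct sum is the minimum of the depths, $\dep M\geq 1$.

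The \emph{necessity} direction, however, contains a genuine error and an unresolved gap.

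The error is the assertion that \emph{the graded primes of $R$ are precisely the ideals $(X_j:j\notin I)$}. This is only true for the \emph{fine} grading (when the $m$ degrees $\deg X_i$ are the $m$ distinct unit vectors, so each graded component of $R$ is at most one-dimensional). In the generality of the proposition --- and already in the bigraded setting of Section~5, or even the standard $\NN$-graded case --- there are many more graded primes: for instance $(X_1-X_2)$ when $\deg X_1=\deg X_2$, or $(X^3-Y^2)$ when $\deg X=2$, $\deg Y=3$. The factors $(R/P)(-\ab)$ of a prime filtration therefore do \emph{not} contribute terms of the shape $\tb^\ab/\prod_{i\in I}(1-\tb^{\deg X_i})$, and to argue that $H_{R/P}$ has a Hilbert decomposition without polynomial part is essentially to invoke the proposition one is trying to prove. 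Some additional Noetherian induction on $P$ (or on $\dim R/P$) is needed here and is not supplied.

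Independently, the \qq{absorption} step is not actually carried out, and the naive version of it fails. Your guiding idea (lift a socle element of $M/M_{k-1}$, use that $\mm m\neq 0$) does not by itself show how to extend the filtration: if $\operatorname{Ass}(M/M_{k-1})=\{\mm\}$ then \emph{every} nonzero cyclic submodule of $M/M_{k-1}$ has $\mm$-primary annihilator, so no choice of $M_k$ gives a factor $R/P$ with $P$ a non-maximal prime --- one would have to revise an \emph{earlier} step, and nothing in the sketch controls that this backtracking terminates. Moreover it really can happen that an innocent choice of witness leads into this dead end even though $\dep M>0$: take $R=\kk[X,Y,Z]$ and $M=R/(X,Y)\cong\kk[Z]$, which has depth $1$; the element $\overline{m}=\overline{Z}$ is a witness for the non-maximal prime $(X,Y)$, yet $M/R\overline{m}\cong\kk$. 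So the statement \qq{take any witness of a non-maximal associated prime} is not enough, and your proof stops precisely where the work begins. (For the record, the paper itself does not give a proof here but refers to \cite[Prop.~2.4]{MU}, remarking that the proof extends from the $\ZZ$-graded to the multigraded case.)
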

The difference to \Cref{thm:hdec} is that there is no term $Q_\emptyset$. We call a Hilbert decomposition as in \Cref{eq:hdec0} a \demph{Hilbert decomposition without polynomial part}.
As in \Cref{thm:hdec}, this result is essentially contained in \cite[Prop.~2.4]{MU}, but there it is stated only for $\ZZ$-graded rings; the proof in our context follows by the same method.

\section{The Bigraded case}
In this section we consider the $\ZZ^2$--graded situation.
More precisely, let $R=\mathbb{K}[X_1,\ldots , X_m,Y_1,\ldots , Y_{\tilde{m}}]$
be the polynomial ring with a $\ZZ^n$-grading given by $\deg X_i =(1,0)$ for all $i \in [m]$ and $\deg Y_i =(0,1)$ for all $i \in [\tilde{m}]$.

Specializing \Cref{thm:hilberts} to this situation we obtain the following characterization of Hilbert series over $R$:
\begin{proposition}\label{cor:bigraded}
	For a formal Laurent series $H \in \ZZ(\!(t_1, t_2)\!)$, there exists a finitely generated graded $R$-module $M$ with $H = H_M$ if and only if $H$ satisfies the following conditions:
	\begin{enumerate}
		\item $H \geq 0$,
		\item $H\cdot (1-t_1)^{m}\cdot(1-t_2)^{\tilde{m}}$ is a Laurent polynomial, and
		\item $\HP{H}$ has positive extremal coefficients.
	\end{enumerate}
\end{proposition}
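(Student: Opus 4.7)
The plan is to derive the proposition directly from \Cref{thm:hilberts} by analyzing how its condition (b) simplifies when $n = 2$. Necessity is essentially immediate: (1) and (2) are built into the definition of a Hilbert series of a finitely generated module, while (3) is the special case $I = [2]$, $\ub = \mathbf{0}$ of \Cref{thm:hilberts}(2)(b).

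For sufficiency I would assume (1), (2), (3) and verify that $\HP{H|_{I, \ub}}$ has positive extremal coefficients for every $I \subseteq [2]$ and every $\ub \in \ZZ^2$, splitting into three cases according to $|I|$. For $I = \emptyset$ one has $\HP{H|_{\emptyset, \ub}} = \coeff{H}{\ub}$, a constant; it has a positive extremal coefficient exactly when it is positive and no extremal monomial when it is zero, so this case is equivalent to condition (1). For $|I| = 1$, say $I = \{1\}$, the restriction $H|_{\{1\}, \ub}$ has nonnegative coefficients by (1), so $\HP{H|_{\{1\}, \ub}}$ takes nonnegative values at large integer arguments; a nonzero univariate polynomial has a unique extremal monomial (its leading term, with nonzero coefficient by definition of degree), and nonnegativity at infinity forces this coefficient to be positive. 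If the polynomial is identically zero there is nothing to check.

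The main case, and where I expect the real work to lie, is $I = [2]$. Since $\coeff{H|_{[2], \ub}}{\ab} = \coeff{H}{\ub + \ab}$ for $\ab \in \NN^2$, and the Hilbert polynomial is determined by its values at sufficiently large arguments, one obtains the identity $\HP{H|_{[2], \ub}}(Z_1, Z_2) = \HP{H}(Z_1 + u_1, Z_2 + u_2)$. It then suffices to show that the property of having positive extremal coefficients is preserved under shifting the argument of a polynomial $p \in \QQ[Z_1, Z_2]$. The key observation is that if $\rb$ is maximal (under $\leq$) in $\mathrm{supp}(p)$, no monomial $\Zb^{\rb'}$ with $\rb' > \rb$ occurs in $p$, so when $p(Z_1 + u_1, Z_2 + u_2)$ is expanded, the only contribution to $\Zb^\rb$ comes from $c_\rb \Zb^\rb$ itself; hence $\rb$ remains in the support of the shifted polynomial with the same positive coefficient. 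A symmetric argument, chasing any purported maximal exponent of the shifted polynomial upward into $\mathrm{supp}(p)$, rules out new extremal exponents. Thus the extremal monomials and their coefficients transfer verbatim from $p$ to $p(Z_1 + u_1, Z_2 + u_2)$, and (3) propagates to every $\ub$, completing the reduction to \Cref{thm:hilberts}.
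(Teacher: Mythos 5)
Your proof is correct and follows the same reduction strategy as the paper: show that conditions (1)--(3) imply the hypotheses of \Cref{thm:hilberts} by checking the cases $|I| = 0, 1, 2$ separately. You are somewhat more explicit than the paper on the case $I = [2]$: the paper simply invokes condition (3), implicitly using that the property of having positive extremal coefficients is invariant under the substitution $p(\Zb) \mapsto p(\Zb + \ub)$ (since $\HP{H|_{[2],\ub}}(\Zb) = \HP{H}(\Zb + \ub)$), whereas you actually verify this shift-invariance by tracking the maximal exponents of the support and their coefficients. For $|I| = 1$ the paper routes through \Cref{lem:ecken}, noting that the unique extremal monomial of a univariate polynomial is a vertex of its Newton polytope, while you argue directly that eventual nonnegativity forces a positive leading coefficient; these are essentially the same observation.
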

\begin{proof}
	The necessity of the conditions is clear from \Cref{thm:hilberts}.
	
	For the sufficiency, we show that given conditions imply the hypotheses of \Cref{thm:hilberts}.
	So consider $I \subseteq [2]$ and $\ub \in \ZZ^2$. If $I = \emptyset$, then $\HP{H|_{I,\ub}} = \coeff{H}{\ub} \geq 0$, and $\HP{H}$ has positive extremal coefficients by assumption.
	
	So we only need to consider the case that $|I| = 1$.
	In this case $\HP{H|_{I,\ub}}$ is a univariate polynomial, so its only extremal monomial is the leading one, which is also a vertex of its Newton polytope. Thus the claim follows from \Cref{lem:ecken}.
\end{proof}

Our next main result will be the characterization of Hilbert series of positive Hilbert depth over $R$. For this we will consider certain pairs of sequences of indices in $\ZZ^2$. 

\begin{definition}
\begin{enumerate}
\item A sequence $\left(\ub^{(i)}\right)_{i=1}^p$ in $\ZZ^2$ is called \emph{declining}, if
\[ 
	u_1^{(i)} < u_1^{(i+1)} \quad \mbox{and} \quad u_2^{(i)} >u_2^{(i+1)} \quad \mbox{for} \quad i=1, \ldots , p-1.
\]
The set of all declining sequences will be denoted by $\ST$.
\item For a sequence $U=\left(\ub^{(i)}\right)_{i=1}^p$ we define $\down{U} := (\ub^{(i)} \wedge \ub^{(i+1)})_{i=1}^{p-1}$.
\end{enumerate}
\end{definition}
The two sequences $U$ and $\down{U}$ can be visualized as a \qq{staircase}, see \Cref{fig:staircase}.
\begin{figure}[h]
	\begin{tikzpicture}[scale=0.6]
		\draw[dotted] (0,0) grid [step=1cm](8,6);
		\draw[] (0,0) -- (8,0);
		\draw[] (0,0) -- (0,6);
		\draw[thick] (1,5) -- (1,4) -- (3,4) -- (3,3) -- (4,3) -- (4,1) -- (7,1);
		\draw[fill] (1,5) circle [radius=0.1]; 
		\draw[fill] (3,4) circle [radius=0.1]; 
		\draw[fill] (4,3) circle [radius=0.1]; 
		\draw[fill] (7,1) circle [radius=0.1]; 
		\draw[fill=white] (1,4) circle [radius=0.1];
		\draw[fill=white] (3,3) circle [radius=0.1];  
		\draw[fill=white] (4,1) circle [radius=0.1]; 
	\end{tikzpicture}
\caption{A declining sequence $U=\left\{(1,5),(3,4),(4,3),(7,1)\right\}$ with $\down{U}=\left\{(1,4) ,(3,3),(4,1)\right\}$.}
\label{fig:staircase}
\end{figure}

\begin{theorem}\label{th:stc}
	Let $H \in \ZZ(\!(t_1, t_2)\!)$ be a formal Laurent series, which is the Hilbert series of some finitely generated graded $R$-module.
	Let further $S := \kk[X, Y]$ be the standard--$\ZZ^2$--graded polynomial ring in two variables.
	Then the following statements are equivalent:
	\begin{enumerate}[(a)]
		\item $H$ has positive Hilbert depth.
		\item For any finitely generated torsionfree $S$-module $N$, it holds that
		\begin{equation}\label{eq:HR}
		\frac{H\cdot H_N}{H_S} \geq 0.
		\end{equation}
		\item Condition (b) holds for any finitely generated torsionfree $S$ module of rank $1$, i.e. every fractional monomial ideal $I \subseteq \kk[X^{\pm 1},Y^{\pm 1}]$.
		\item $H= \sum_{i,j} h_{ij} \,t_1^i t_2^j$ satisfies the condition
		\begin{equation} \label{eq:st}
		\sum_{(i,j) \in \down{U}} h_{ij} \leq \sum_{(i,j) \in U} h_{ij} \quad \text{for all} \ \ U \in \ST. \tag{ST}
		\end{equation}
	\end{enumerate}
\end{theorem}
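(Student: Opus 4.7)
The plan is to establish the cycle (a) $\Rightarrow$ (d), (b) $\Rightarrow$ (c) $\Leftrightarrow$ (d), (d) $\Rightarrow$ (b), and the main new direction (d) $\Rightarrow$ (a). For (a) $\Rightarrow$ (d), use \Cref{prop:hdep} to write $H$ as a nonnegative combination of terms of the form $\tb^\ub/((1-t_1)^a(1-t_2)^b)$ with $(a,b)\neq(0,0)$. Since (ST) is linear in $H$, it is enough to check it on a single such term, whose coefficients factor as $h_{ij} = f_a(i-u_1)\,g_b(j-u_2)$ where $f_a, g_b$ are nonneg and non-decreasing on $\NN$ (and zero outside). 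For a declining sequence $U = (\ub^{(k)})_{k=1}^p$, a telescoping along the second coordinate yields
\begin{equation*}
\sum_{(i,j)\in U} h_{ij} - \sum_{(i,j)\in\down{U}} h_{ij} = f_a(u_1^{(p)}-u_1)\,g_b(u_2^{(p)}-u_2) + \sum_{k=1}^{p-1} f_a(u_1^{(k)}-u_1)\bigl(g_b(u_2^{(k)}-u_2) - g_b(u_2^{(k+1)}-u_2)\bigr) \geq 0,
\end{equation*}
since $g_b$ is non-decreasing and $u_2^{(k)}>u_2^{(k+1)}$.

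For (c) $\Leftrightarrow$ (d), attach to a declining sequence $U = (\ub^{(i)})_{i=1}^p$ the fractional monomial ideal $I_U$ generated by $\tb^{-\ub^{(p)}},\dotsc,\tb^{-\ub^{(1)}}$. The minimal free resolution of $I_U$ over $S$ takes the form $0 \to \bigoplus_{i=1}^{p-1} S(\ub^{(i)}\wedge\ub^{(i+1)}) \to \bigoplus_{i=1}^p S(\ub^{(i)}) \to I_U \to 0$, giving $H_{I_U}/H_S = \sum_i \tb^{-\ub^{(i)}} - \sum_i \tb^{-(\ub^{(i)}\wedge\ub^{(i+1)})}$. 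The constant term of $H\cdot H_{I_U}/H_S$ is therefore exactly $\sum_{(i,j)\in U} h_{ij} - \sum_{(i,j)\in\down{U}} h_{ij}$, and shifting the base point of $U$ realises every coefficient of $H\cdot H_{I_U}/H_S$ as a shifted instance of (ST), so (c) and (d) are equivalent.

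The implication (b) $\Rightarrow$ (c) is immediate. For (d) $\Rightarrow$ (b), induct on the rank of a torsionfree $S$-module $N$. In rank $1$, $N$ is (up to a shift) a graded ideal $J\subseteq S$; its initial ideal $\mathrm{in}_\prec J$ with respect to any monomial order is a fractional monomial ideal with $H_{\mathrm{in}_\prec J} = H_J$, so the claim reduces to (c). In rank $r\geq 2$, the graded saturation $N_1$ of $S\cdot n$ for any nonzero homogeneous $n \in N$ is a rank-$1$ torsionfree submodule with torsionfree quotient $N/N_1$ of rank $r-1$; the result follows by induction and the additivity $H_N = H_{N_1} + H_{N/N_1}$.

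Finally, the main obstacle is (d) $\Rightarrow$ (a). Starting from any Hilbert decomposition $H = Q_\emptyset + \sum_{(a,b)\neq(0,0)} Q_{(a,b)}/((1-t_1)^a(1-t_2)^b)$ supplied by \Cref{thm:hdec}, the plan is to systematically eliminate the monomials of the polynomial part $Q_\emptyset$. Using the identities $\tb^\ab = \tb^\ab/(1-t_i) - \tb^{\ab+\eb_i}/(1-t_i)$ (and their higher-power analogues), monomials of $Q_\emptyset$ are moved into the non-polynomial summands $Q_{(1,0)}, Q_{(0,1)}, Q_{(1,1)}, \dotsc$ at the cost of modifying higher-degree coefficients; the staircase condition (ST) furnishes exactly the combinatorial room in these numerators needed to ensure the rewriting never introduces negative coefficients. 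This adapts the construction of \cite{MU} from the univariate non-standard graded setting to the bigraded one.
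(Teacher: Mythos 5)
The key gap is in (d) $\Rightarrow$ (a), which is the hard direction and the real content of the theorem. You correctly identify the strategy — eliminate the polynomial part $Q_\emptyset$ of a Hilbert decomposition by pushing its monomials into the $1/(1-t_i)$ summands — but the assertion that ``the staircase condition (ST) furnishes exactly the combinatorial room needed to ensure the rewriting never introduces negative coefficients'' is precisely what has to be proved, and you give no argument for it. The paper isolates this into two technical lemmas. \Cref{lem:dec} first reduces to a series of the special shape $Q_0 + Q_1/(1-t_1) + Q_2/(1-t_2)$ satisfying (ST), by peeling off a tail $H_2$ supported in sufficiently high degrees which already admits a decomposition with no polynomial part. \Cref{l:r1} is the engine: it shows that subtracting the elementary pieces $h_{r\ell}\,t_1^r t_2^\ell/(1-t_2)$ and $\mu\,t_1^{p+1}t_2^\ell/(1-t_1)$ from such a series preserves (ST). Its proof requires analysing ``critical'' declining sequences $U$ for which only the $U$-side of (ST) decreases, and repairing them by appending an extra point to form a \emph{weakly} declining sequence, whence \Cref{lem:weakly} is invoked. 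None of that structure appears in your sketch; without a lemma of this kind, the claim that the greedy elimination never produces a negative coefficient is unsupported, and this is exactly where all the work of the theorem lies.

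The remaining implications are broadly right, with one inaccuracy. In your (a) $\Rightarrow$ (d) the telescoping relies on $g_b$ being non-decreasing on $\NN$, which fails for $b=0$ where $g_0$ is the indicator of $\{0\}$; since a decomposition without polynomial part certainly can contain $(a,b)=(1,0)$-terms, you must telescope along the other coordinate in that case, using that $f_a$ is non-decreasing (valid because $(a,b)\neq(0,0)$ forces $a\geq 1$ when $b=0$). With that fix, your direct (a) $\Rightarrow$ (d) is a genuine shortcut compared with the paper's (a) $\Rightarrow$ (b), which itself needs \Cref{lem:dec} and the torsion-free injections $N(-\eb_i)\hookrightarrow N$. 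Your (d) $\Rightarrow$ (b) by rank induction is a reasonable alternative route not taken in the paper (though the detour through initial ideals is unnecessary: $\ZZ^2$-graded ideals of $\kk[X,Y]$ with this grading are already monomial), and your (c) $\Rightarrow$ (d) via the resolution of the associated fractional monomial ideal coincides with the paper's argument.
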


\begin{remark}
\begin{enumerate}[1)]
	\item Note that the torsionfree modules (over a domain) are exactly the first syzygy modules.
		
	\item If $(\ub^{(i)})_{i=1}^1$ is a declining sequence with only one entry, then \eqref{eq:st} asserts that $h_{\ub^{(1)}} \geq 0$.
	Hence any series satisfying condition \eqref{eq:st} is in particular nonnega\-tive.

	\item Although condition \eqref{eq:st} resembles its counterpart in the non-standard--$\ZZ$-graded case, namely the condition \eqref{eq:star} of \Cref{thm:nonstandart} below, there are important differences: 
		The inequalities required by condition \eqref{eq:star} only relate coefficients lying within one common period of the module's Hilbert function, they have the same number of terms on both sides, and this number is bounded above by \[\min \{\deg X, \deg Y \}.\]
		By contrast, a declining sequence $U$ may have any number of entries, which may be arbitrarily separated, and the right-hand side of \eqref{eq:st} has always one term more than the left-hand side.
		One might think that it could be possible to weaken condition \eqref{eq:st} by restricting it to a subset of $\ST$ consisting of somehow bounded sequences, but this turns out to be a vain hope.
		For instance, the examples
			\[ H_k = 1 + \sum_{i=0}^k \frac{t_1^i}{1-t_2}, \quad k  \in \NN \]
		show that we cannot afford to restrict condition \eqref{eq:st} to those sequences $U=(\ub^{(i)})_{i=0}^p$ where $\max_{i \in [m-1]} \{ u_1^{(i+1)} - u_1^{(i)} \} \leq C$ for some $C \in \NN$. 
\end{enumerate}
\end{remark}

Before we present the proof of \Cref{th:stc} we give several lemmata.
First of all, it turns out to be convenient to consider a slightly larger class of inequalities.
For this, we call a sequence $\left(\ub^{(i)} \right)_{i=0}^p$ in $\ZZ^2$ \emph{weakly declining}, if 
$ u_1^{(i)} \leq u_1^{(i+1)}$ and $u_2^{(i)} \geq u_2^{(i+1)}$ for $i=0, \ldots, p-1$
and let $\STp$ denote the set of all weakly declining sequences.
Moreover, for a formal Laurent series $H = \sum_{\ub \in \ZZ^2} h_\ub \tb^\ub \in \ZZ(\!(\tb)\!)$ and a sequence $U$, we set
\[ \sigma_U(H) := \sum_{\ub \in U} h_\ub - \sum_{\ub \in \down{U}} h_\ub \]

\begin{lemma}\label{lem:weakly}
	Let $H\in \ZZ(\!(t_1,t_2)\!)$ be a formal Laurent series.
	Then $\sigma_U(H) \geq 0$ for all $U \in \ST$ if and only if $\sigma_U(H) \geq 0$ for all $U \in \STp$.
\end{lemma}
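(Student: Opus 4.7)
The forward direction from $\STp$ to $\ST$ is immediate since $\ST \subseteq \STp$. For the converse, my plan is to describe an iterative reduction that transforms any $U \in \STp$ into a strictly declining $U'' \in \ST$ without changing the value of $\sigma_U(H)$; applying the assumed inequality $\sigma_{U''}(H) \geq 0$ then yields $\sigma_U(H) \geq 0$.

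The reduction step is as follows. Suppose $U = (\ub^{(1)}, \dotsc, \ub^{(p)}) \in \STp$ fails to be strictly declining at some index $i$. By the weakly declining hypothesis, either $u_1^{(i)} = u_1^{(i+1)}$, in which case $\ub^{(i+1)} \leq \ub^{(i)}$ and $\ub^{(i)} \wedge \ub^{(i+1)} = \ub^{(i+1)}$, or $u_2^{(i)} = u_2^{(i+1)}$, in which case $\ub^{(i)} \leq \ub^{(i+1)}$ and $\ub^{(i)} \wedge \ub^{(i+1)} = \ub^{(i)}$. In either case I delete from $U$ the entry that coincides with the meet, namely $\ub^{(i+1)}$ in the first case and $\ub^{(i)}$ in the second, obtaining a shorter weakly declining sequence $U'$.

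The crucial identity is $\sigma_{U'}(H) = \sigma_U(H)$. The deletion removes from the numerator of $\sigma_U(H)$ precisely the coefficient $h_{\ub^{(i)} \wedge \ub^{(i+1)}}$ that simultaneously disappears from the denominator. The remaining subtlety is that when the deleted entry is interior, its removal merges two adjacent meets in $\down{U}$ into one in $\down{U'}$; I would verify that these coincide. For example, in the first case one needs $\ub^{(i)} \wedge \ub^{(i+2)} = \ub^{(i+1)} \wedge \ub^{(i+2)}$, which follows from $u_1^{(i)} = u_1^{(i+1)} \leq u_1^{(i+2)}$ together with $u_2^{(i+1)} \geq u_2^{(i+2)}$, and similarly $\ub^{(i-1)} \wedge \ub^{(i+1)} = \ub^{(i-1)} \wedge \ub^{(i)}$ in the second case. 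The boundary cases $i = 1$ or $i+1 = p$ are analogous but simpler.

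Iterating the reduction terminates because the length strictly decreases at each step, producing $U'' \in \ST$ with $\sigma_{U''}(H) = \sigma_U(H)$. There is no conceptual obstacle here; the only work is the small case analysis for the meet-preservation identity, which is a routine check on four configurations of adjacent coordinates.
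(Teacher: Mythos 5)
Your proof is correct and follows essentially the same route as the paper: you repeatedly delete the entry that coincides with an adjacent meet, observe that $\sigma_U$ is unchanged (the duplicated coefficient cancels between $U$ and $\down{U}$, and the merged meet agrees with the old one by monotonicity), and induct on length. The only difference is that you spell out the meet-preservation check, which the paper leaves implicit (and in passing you correct a small typo in the paper's statement of the second deletion case).
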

\begin{proof}
	One implication is trivial.
	So assume that $\sigma_U(H) \geq 0$ for all $U \in \ST$ and consider a weakly declining sequence $U=\left(\ub^{(i)}\right)_{i=0}^p$ in $\ZZ^2$.
	If $u_1^{(i)} = u_1^{(i+1)}$ for some index $i$, then $\ub^{(i+1)}= \ub^{(i)} \wedge \ub^{(i+1)}$.
	Hence if $U'$ is the sequence obtained from $U$ by deleting $\ub^{(i+1)}$, then $\sigma_U(H) = \sigma_{U'}(H)$.
	Similarly, if $u_1^{(i)} = u_1^{(i+1)}$ then $\ub^{(i)} =\ub^{(i)} \wedge \ub^{(i+1)}$, so we may delete $\ub^{(i)}$ from $U$.
	After finitely many such deletions, we obtain a declining sequence $U'' \in \ST$ with $\sigma_U(H) = \sigma_{U''}(H)$.
	As $\sigma_{U''}(H) \geq 0$ by assumption, the claim follows.
\end{proof}

The following lemma essentially reduces the question to the fine-graded situation.

Recall that $\coeff{H}{\ub}$ denotes the coefficient of $\tb^\ub$ in the series $H$.
\begin{lemma}\label{lem:dec}
	Assume that $H \in \NN(\!(t_1,t_2)\!)$ admits a Hilbert decomposition without polynomial part.
	Then there exists an $\Nb_0 \in \ZZ^2$ with the following property:
	
	For any $\Nb \geq \Nb_0$, there exists a decomposition $H = H_1 + H_2$
	such that
	\begin{enumerate}
		\item $H_1$ is of the form $\frac{Q_1}{1-t_1} + \frac{Q_2}{1-t_2}$ for some $Q_1, Q_2 \in \NN[t_1^{\pm 1}, t_2^{\pm 1}]$.
		Moreover, it holds that $\coeff{H_1}{\Nb} = 0$  and $\coeff{H_1}{\ub \wedge \Nb} = \coeff{H_1}{\ub}$ for all $\ub \in \ZZ^2$.
		\item $H_2$ has a Hilbert decomposition without polynomial part and it satisfies $\coeff{H_2}{\ub} = 0$ for all $\ub < \Nb$.
	\end{enumerate}
\end{lemma}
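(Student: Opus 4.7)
The plan is to prove the lemma summand by summand using the given Hilbert decomposition of $H$ without polynomial part. Since this decomposition is finite, write
\[
H = \sum_{s \in S} c_s \, F_s, \qquad F_s := \frac{\tb^{\ub^{(s)}}}{(1-t_1)^{k_s}(1-t_2)^{l_s}},
\]
with $c_s \in \NN$ and $(k_s, l_s) \neq (0, 0)$, and set $\Nb_0 := (\max_{s \in S} u_1^{(s)} + 1,\, \max_{s \in S} u_2^{(s)} + 1)$. For every $\Nb \geq \Nb_0$ we then have $\Nb > \ub^{(s)}$ coordinatewise for each $s$. Both conditions required of $(H_1, H_2)$ are closed under nonnegative integer sums, so it suffices to construct for each $s$ a decomposition $F_s = H_1^{(s)} + H_2^{(s)}$ of the analogous form; the series $H_1 := \sum_s c_s H_1^{(s)}$ and $H_2 := \sum_s c_s H_2^{(s)}$ then do the job.

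Fix $s$ and write $f(i) := \binom{i - u_1^{(s)} + k_s - 1}{k_s - 1}$ and $g(j) := \binom{j - u_2^{(s)} + l_s - 1}{l_s - 1}$, extended by zero outside their natural domains, so that $\coeff{F_s}{(i,j)} = f(i)g(j)$. The prescribed conditions on $H_1^{(s)}$ force it pointwise: it must coincide with $F_s$ on $\{\vb \leq \Nb,\, \vb \neq \Nb\}$ (from the vanishing of $H_2^{(s)}$ there), must vanish on $\Nb + \NN^2$ (by the coordinatewise-minimum constancy together with $\coeff{H_1^{(s)}}{\Nb} = 0$), and must equal $f(v_1)g(N_2)$ for $v_1 < N_1,\,v_2 > N_2$, resp.~$f(N_1)g(v_2)$ for $v_1 > N_1,\,v_2 < N_2$, again by constancy. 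Because $f$ and $g$ are nondecreasing nonnegative on their supports, this determined $H_1^{(s)}$ splits as $\frac{Q_1^{(s)}}{1-t_1} + \frac{Q_2^{(s)}}{1-t_2}$ with
\[
Q_1^{(s)} = f(N_1)\, t_1^{N_1} \sum_{j=u_2^{(s)}}^{N_2-1} g(j)\, t_2^{j}
\]
(horizontal rays emanating from column $N_1$) and an analogous $Q_2^{(s)}$ (vertical rays at columns $v_1 \in [u_1^{(s)}, N_1 - 1]$) built from the nonnegative first differences $g(j)-g(j-1)$; in particular $Q_1^{(s)}, Q_2^{(s)} \in \NN[t_1^{\pm 1}, t_2^{\pm 1}]$.

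The main work is to exhibit a Hilbert decomposition of $H_2^{(s)} := F_s - H_1^{(s)}$ without polynomial part. A direct check shows that $H_2^{(s)}$ is supported on the three disjoint regions $\{\vb \geq \Nb\}$, $\{u_1^{(s)} \leq v_1 < N_1,\,v_2 > N_2\}$, and $\{v_1 > N_1,\,u_2^{(s)} \leq v_2 < N_2\}$, with values $f(v_1)g(v_2)$, $f(v_1)(g(v_2) - g(N_2))$, and $(f(v_1) - f(N_1))g(v_2)$ respectively. The key identity is the Vandermonde expansion
\[
f(N_1 + r) = \sum_{p=0}^{k_s - 1} \binom{N_1 - u_1^{(s)} + k_s - 1}{k_s - 1 - p}\binom{r}{p},
\]
whose coefficients are manifestly nonnegative, together with its symmetric analogue for $g$. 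Combining these with the standard generating function $\sum_{r \geq 0}\binom{r}{p}t^r = t^p/(1-t)^{p+1}$ rewrites each of the three region-contributions of $H_2^{(s)}$ as a nonnegative combination of terms $\tb^{\ab}/((1-t_1)^k(1-t_2)^l)$ with $k + l \geq 1$, which together form the required Hilbert decomposition. This combinatorial rewriting is the main obstacle of the proof; once it is in place, the vanishing $\coeff{H_2^{(s)}}{\vb} = 0$ for $\vb < \Nb$ is immediate from the construction, and the pure cases $k_s = 0$ or $l_s = 0$ degenerate to an analogous single-variable argument in which only one ray family is needed.
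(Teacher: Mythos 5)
Your proposal is correct, but it takes a genuinely different route from the paper's. The paper works directly with a given Hilbert decomposition $H = \sum_{\ab} Q_\ab / \prod_i (1-t_i)^{a_i}$ and repeatedly applies the rewriting rule $\tfrac{1}{\prod_i(1-t_i)^{a_i}} = \tfrac{t_j}{\prod_i(1-t_i)^{a_i}} + \tfrac{1-t_j}{\prod_i(1-t_i)^{a_i}}$ to push low-degree monomials out of the numerators $Q_\ab$ with $a_1+a_2 \geq 2$ and collect them in $Q_{(1,0)}$ and $Q_{(0,1)}$; it then sets $H_1 = Q_{(1,0)}/(1-t_1) + Q_{(0,1)}/(1-t_2)$ and $H_2 = H - H_1$, choosing $\Nb_0$ only large enough to dominate the original $Q_{(1,0)}, Q_{(0,1)}$. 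You instead reduce by linearity to a single elementary summand $F_s = \tb^{\ub^{(s)}}/((1-t_1)^{k_s}(1-t_2)^{l_s})$, observe that the constraints in the lemma determine $H_1^{(s)}$ pointwise, explicitly exhibit the forced $H_1^{(s)}$ as $Q_1^{(s)}/(1-t_1) + Q_2^{(s)}/(1-t_2)$ using the monotonicity of the coefficient functions $f$ and $g$, and then produce a Hilbert decomposition without polynomial part for $H_2^{(s)} = F_s - H_1^{(s)}$ via Vandermonde expansions of $f(N_1 + r)$ and $g(N_2 + r)$ combined with $\sum_{r}\binom{r}{p}t^r = t^p/(1-t)^{p+1}$. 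Your argument is more explicit and self-contained (it also verifies the form of $H_1$ rather than just asserting that ``the claimed properties follow readily''), at the cost of more computation; the paper's argument is shorter but leaves the termination and bookkeeping of the rewriting process to the reader. One small caveat: for the pure cases $k_s = 0$ or $l_s = 0$ your binomial formula for $f$ or $g$ degenerates (e.g. $\binom{\cdot}{-1}$), so these really do need the separate one-variable treatment you mention, but as you note that case is straightforward.
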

\begin{proof}
Choose a Hilbert decomposition of $H$ without polynomial part:
\[ H = \sum_{\ab\in\NN^2, \ab \neq 0} \frac{Q_\ab}{\prod_{i =1}^2(1-t_i)^{a_i}}. \]
Choose $\Nb_0 \in \ZZ^2$ which is strictly larger than the degrees of all monomials in the numerator polynomials $Q_{(1,0)}$ and $Q_{(0,1)}$.
By repeatedly using the relation
\[
	\frac{1}{\prod_{i=1}^2(1-t_i)^{a_i}} = t_j\frac{1}{\prod_{i=1}^2(1-t_i)^{a_i}}  + (1-t_j) \frac{1}{\prod_{i =1}^2(1-t_i)^{a_i}},
\]
we may modify the Hilbert decomposition such that it satisfies the following:
\begin{itemize}
\item[$\diamond$] $Q_{(0,0)} = 0$.
\item[$\diamond$] For every $\ab \in \NN^2, \ab \neq (0,1),(1,0)$ such that $Q_\ab \neq 0$, it holds that $Q_\ab$ contains no monomials of degree strictly less than $\Nb$.
\item[$\diamond$] The polynomial $(1-t_1) Q_{(0,1)} + (1-t_2)Q_{(1,0)}$ contains only monomials which are strictly less than $\Nb$.
\end{itemize}
We set 
\[ H_1 := \frac{Q_{(1,0)}}{1-t_1} + \frac{Q_{(0,1)}}{1-t_2} \]
and $H_2 := H - H_1$.
The claimed properties of $H_1$ and $H_2$ follow readily.
\end{proof}

The third lemma is the key step in our proof of \Cref{th:stc}.
Here we show how to decompose a non-negative Laurent series of the form $Q_0 + Q_1/(1-t_1) + Q_2/(1-t_2)$.
\begin{lemma} \label{l:r1}
	Let
		\[ H= \sum_{i=k}^{\infty} \sum_{j= \ell}^{\infty} h_{ij} \, t_1^i t_2^j \]
	be a formal Laurent series satisfying condition \eqref{eq:st}.
	Assume further that  $h_{p \ell} = 0$ for some $p \geq k$ and let $\mu := \min \set{ h_{i \ell} \with i > p }$.
	Then the series
		\[ H - \frac{1}{1-t_2} \sum_{i=k}^{p-1} h_{i \ell} t_1^i t_2^\ell  - \frac{1}{1-t_1} \mu t_1^{p+1} t_2^{\ell}  \]
	satisfies condition \eqref{eq:st} as well.
\end{lemma}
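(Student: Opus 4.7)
My plan is to verify $\sigma_U(H') \geq 0$ for every declining sequence $U = (\ub^{(1)}, \ldots, \ub^{(q)}) \in \ST$, where $H' := H - R_1 - R_2$ and $\sigma_U$ is the alternating sum functional $\sigma_U(H) = \sum_{t=1}^q h_{\ub^{(t)}} - \sum_{t=1}^{q-1} h_{\ub^{(t)} \wedge \ub^{(t+1)}}$. Writing $\sigma_U(H') = \sigma_U(H) - \sigma_U(R_1+R_2)$ and using $\sigma_U(H) \geq 0$ from (ST) on $H$, it suffices to prove the bound $\sigma_U(R_1+R_2) \leq \sigma_U(H)$ for all such $U$. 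As a preliminary reduction: $H$ is supported in $\{(i,j) : i \geq k,\ j \geq \ell\}$ and $R_1+R_2$ in $\{j \geq \ell\}$, so any lattice point with second coordinate strictly below $\ell$ contributes zero to both sides. Letting $t^*$ denote the largest index with $u_2^{(t^*)} \geq \ell$ (if no such index exists, the claim is trivial), the terms with $t > t^*$ drop out, so I may assume $t^* = q$.

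The next step is to determine when $\sigma_U(R_1+R_2)$ is nonzero. Writing $i^* := u_1^{(q)}$ and noting that $U$ is strictly declining so the row $\ell$ is crossed at most once (at $t = q$), a column-by-column bookkeeping of the two support regions $[k,p-1] \times [\ell, \infty)$ (for $R_1$) and $[p+1,\infty) \times \{\ell\}$ (for $R_2$) shows that $\sigma_U(R_1+R_2) \neq 0$ only in two scenarios: \emph{(A)} $i^* \in [k,p-1]$, with $\sigma_U(R_1+R_2) = h_{i^*,\ell}$; or \emph{(B)} $u_2^{(q)} = \ell$, $i^* \geq p+1$, and (when $q > 1$) $u_1^{(q-1)} \leq p$, with $\sigma_U(R_1+R_2) = \mu$. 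The key subtlety is in Case~(B): when $u_1^{(q-1)} \geq p+1$ the positive contribution from $\ub^{(q)}$ is exactly cancelled by the negative contribution of $\ub^{(q-1)} \wedge \ub^{(q)} = (u_1^{(q-1)},\ell)$, which also lies in the support of $R_2$.

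In each non-trivial case I then construct an auxiliary declining sequence $V$ and invoke (ST) for $H$. In Case~(A), take $V := (\ub^{(1)}, \ldots, \ub^{(q)}, (p,\ell))$ if $u_2^{(q)} > \ell$, and $V := (\ub^{(1)}, \ldots, \ub^{(q-1)}, (p,\ell))$ if $u_2^{(q)} = \ell$; either way $V$ is strictly declining by $i^* < p$, the \qq{down} points of $V$ coincide with those of $U$, and the hypothesis $h_{p,\ell} = 0$ gives $\sigma_V(H) = \sigma_U(H) - h_{i^*,\ell}$, hence $\sigma_U(H) \geq h_{i^*,\ell}$. In Case~(B), the subcase $q = 1$ is immediate since $\sigma_U(H) = h_{i^*,\ell} \geq \mu$ by the defining property of $\mu$; for $q > 1$ with $u_1^{(q-1)} < p$ the same choice $V := (\ub^{(1)}, \ldots, \ub^{(q-1)}, (p,\ell))$ yields $\sigma_U(H) \geq h_{i^*,\ell} \geq \mu$; and for $q > 1$ with $u_1^{(q-1)} = p$, I take $V := (\ub^{(1)}, \ldots, \ub^{(q-1)})$, so that $\ub^{(q-1)} \wedge \ub^{(q)} = (p,\ell)$ has coefficient $0$ in $H$, giving $\sigma_U(H) - \sigma_V(H) = h_{i^*,\ell} \geq \mu$ and, combined with $\sigma_V(H) \geq 0$, the required bound.

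I expect the principal obstacle to lie in the case analysis of step two, specifically the cancellation in region~$B$ that collapses all possible configurations down to just Cases~(A) and~(B); this requires carefully tracking the three subsets of indices (for $\ub^{(t)}$ in $A$, for $\ub^{(t)}$ in $B$, and for $\ub^{(t)} \wedge \ub^{(t+1)}$ in $B$), each of size at most one, and verifying that their contributions always cancel outside the two listed scenarios. Once that is done, the auxiliary sequences and their inequalities are mechanical, driven purely by $h_{p,\ell} = 0$ and by $h_{i,\ell} \geq \mu$ for $i > p$.
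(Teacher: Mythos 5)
Your proof is correct, and the underlying strategy is the same as the paper's: use the vanishing $h_{p\ell}=0$ to anchor an auxiliary declining sequence at $(p,\ell)$ (by appending it or substituting it for the last entry), then invoke condition~\eqref{eq:st} for $H$ on that sequence. The main structural difference is that the paper proceeds \emph{incrementally}: it first shows that subtracting a single column term $h_{r\ell}\,t_1^r t_2^\ell/(1-t_2)$ preserves~\eqref{eq:st}, then that subtracting the row term $\mu\, t_1^{p+1}t_2^\ell/(1-t_1)$ does, and iterates; this requires the small observation that the hypotheses ($h_{p\ell}=0$ and the value of $\mu$) persist after each subtraction, which the paper leaves implicit. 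You instead compute $\sigma_U(R_1+R_2)$ directly for the full subtraction in one shot, observing that column-wise cancellation between $U$ and $\down{U}$ leaves only the terminal contributions; this makes the \qq{only two nontrivial scenarios} bookkeeping fully explicit and sidesteps the iteration. A second small gain: by choosing to append $(p,\ell)$ when $u_2^{(q)}>\ell$ and to replace the last entry when $u_2^{(q)}=\ell$, your auxiliary sequences are always strictly declining, so you never need the passage from weakly declining to declining sequences (\Cref{lem:weakly}) that the paper uses. One minor inaccuracy: in Case~(A) you write that \qq{the down points of $V$ coincide with those of $U$}; this holds in the subcase $u_2^{(q)}=\ell$, but in the subcase $u_2^{(q)}>\ell$ the sequence $V$ has one \emph{extra} down point, namely $\ub^{(q)}\wedge(p,\ell)=(i^*,\ell)$. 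Your final identity $\sigma_V(H)=\sigma_U(H)-h_{i^*,\ell}$ is nevertheless correct in both subcases, since the new top point contributes $h_{p\ell}=0$ and the new down point contributes $-h_{i^*,\ell}$, so the conclusion is unaffected.
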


\begin{proof}
To prove the lemma, it is enough to show the following two claims:
\begin{enumerate}
	\item For any $k \leq r < p$ the series 
	\[ \tilde{H}:= H - h_{r \ell} \, \frac{t_1^r t_2^{\ell}}{1-t_2} \]
	satisfies condition \eqref{eq:st}.

	\item Further, for $\mu := \min \set{ h_{i \ell} \with i > p }$ the series 
	\[ \hat{H}:= H - \mu \, \frac{  t_1^{p+1} t_2^{\ell}}{1-t_1} \]
	satisfies condition \eqref{eq:st}.
\end{enumerate}

We start with the proof of the first claim.
For this we have to show that condition \eqref{eq:st} is valid for $\tilde{H} =: \sum_{i,j} \tilde{h}_{ij} \, t_1^i t_2^j$.
The coefficients $\tilde{h}_{ij}$ and $h_{ij}$ are equal except for $i = r$ and $j \geq \ell$, where $\tilde{h}_{ij} = h_{ij}-h_{r \ell} $ holds.
Hence we only have to consider declining sequences $U \subseteq\ST$ such that $U$ or $\down{U}$ intersect $C_{r,\ell}:=\left\{ (r,j) ~|~ j \geq \ell \right\}$. 
If $ C_{r,\ell}$ intersects both $U$ and $\down{U}$, 
then we have
\[ \sigma_U(\tilde{H}) = \sigma_U(H) - h_{r\ell} + h_{r\ell} = \sigma_U(H) \geq 0 \]
so we may assume that only $U$ intersects with $ C_{r,\ell}$.
In this case it is not clear a priori whether the corresponding inequality still holds for $\tilde{H}$, because only the right--hand side of the original inequality is diminished.
(A similar problem occurs in \cite{MU}, where the analogous inequalities are called \emph{critical}.)

Let $\ub^{(e)}=(r, j')$ denote the intersection of $U$ and $ C_{r,\ell}$.
Since we assume that $\down{U}$ and $ C_{r,\ell}$ do not intersect, the sequence $U$ either ends in $\ub^{(e)}$ or the point $\ub^{(e)} \wedge \ub^{(e+1)}=\left(r,j'' \right)$, and hence all further points of $U$ and $\down{U}$ lie in the half--plane $\{(x,y) ~|~ y < \ell \}$.
Since all coefficients of $H$ and $\tilde{H}$ in this half--plane vanish, we may assume that the staircase ends in $\ub^{(e)}$.
We amend $U$ by $\tilde{\ub}^{(e+1)} := (p,\ell)$ to build $\tilde{U}$. 
Note that $\tilde{U}$ is weakly declining, hence $\sigma_{\tilde{U}}(H) \geq 0$ by \Cref{lem:weakly}. 
As $\ub^{(e)} = (r,j')$, it follows that $\ub^{(e)}\wedge \tilde{\ub}^{(e+1)} = (r,\ell)$ and thus
\[ 
	\sigma_{U}(\tilde{H}) =  \sigma_{U}(H) - h_{r\ell} =  (\sigma_{\tilde{U}}(H) + h_{r\ell} - h_{p\ell}) - h_{r\ell} = \sigma_{\tilde{U}}(H)  \geq 0.
\]

Now we turn to the proof of the second claim.
By the choice of $\mu$ the series $\hat{H} =: \sum_{i,j} \hat{h}_{ij} t_1^i t_2^j$ is nonnegative.
Similar to the proof of the first claim the verification of condition \eqref{eq:st} for 
$\hat{H}$ reduces to an inspection of sequences $U\in\ST$ intersecting $B_{p+1,\ell} := \left\{ (i,\ell) ~|~ i \geq p+1 \right\}$ in $U$, but not in $D$.
Let $\ub^{(e)} = (i',\ell)$ be this intersection; 
again we may assume that $U$ ends in $\ub^{(e)}$.

Let $\tilde{U}$ be the new weakly declining sequence obtained from $U$ by replacing the last element $\ub^{(e)} = (i', \ell)$ by $\tilde{\ub}^{(e)} := (p,\ell)$.
We assumed that $\ub^{(e-1)} \wedge \ub^{(e)} \notin B_{p+1,\ell}$, hence it holds that $u_1^{(e-1)} \leq p$ and thus $\ub^{(e-1)} \wedge \ub^{(e)} = \ub^{(e-1)}\wedge \tilde{\ub}^{(e)}$.
 It follows that
\begin{align*}
\sigma_{U}(\hat{H}) &= \sigma_{\tilde{U}}(\hat{H}) - h_{p\ell} + \coeff{\hat{H}}{(i',\ell)} = \sigma_{\tilde{U}}(H) + \coeff{\hat{H}}{(i',\ell)} \\
&=  \sigma_{\tilde{U}}(H) +(h_{i'\ell} - \mu) \geq 0,
\end{align*}
since $h_{i'\ell} - \mu \geq 0$.
\end{proof}

\noindent Now we are able to prove Theorem \ref{th:stc}:
 
\begin{proof}[Proof of Theorem \ref{th:stc}] \
\begin{asparadesc}
\item[(a) $\Rightarrow$ (b):]
	Let $N$ be a finite torsionfree $S$-module.
	After shifting the degrees, we may assume that all homogeneous components of $N$ have non-negative degrees.
	Let $\ub \in \ZZ^2$ be a multidegree.
	By \Cref{lem:dec} we can find a decomposition $H = H_1 + H_2$, such that all coefficients of $H_2$ in degrees less or equal than $\ub$ vanish.
	Hence 
\[ \mathfrak{c}\left(\frac{H\cdot H_N}{H_S},\ub\right) = \mathfrak{c}\left(\frac{H_1\cdot H_N}{H_S},\ub \right ). \]
	But $\frac{H_1}{H_S}$ is a sum of terms of the form $(1-t_1)\tb^\ab$ or $(1-t_2)\tb^\ab$ with $\ab \in \ZZ^2$ and nonnegative coefficients.
	Hence it is enough to show that $(1-t_1) H_N \geq 0$ and $(1-t_2) H_N \geq 0$.
	Note that because $N$ is torsionfree, multiplication by $X$ and $Y$ gives inclusions $N(-(1,0)) \hookrightarrow N$ and $N(-(0,1)) \hookrightarrow N$, thus these claims follow.
	
\item[(b) $\Rightarrow$ (c):] This is trivial.
\item[(c) $\Rightarrow$ (d):]
	For a given declining sequence $U = (\ub^{(i)})_{i=1}^p \in \ST$, consider the fractional ideal $I$ generated by $X^{-u_1^{(1)}}Y^{-u_2^{(1)}}, \dotsc, X^{-u_1^{(p)}}Y^{-u_2^{(p)}}$.
	Considering the minimal free resolution of $I$ over $S$ yields that
	\[
	\frac{H_I}{H_S} = \sum_{i=1}^p \tb^{-\ub^{(i)}} - \sum_{i = 1}^{p-1} \tb^{(-\ub^{(i)}) \vee (-\ub^{(i+1)})}.
	\]
	This and (c) imply that
	\begin{align*}
	0 \leq \frac{H\cdot H_I}{H_S} &= \left(\sum_{\ab \in \ZZ^2} h_{\ab} \tb^\ab\right) \left(\sum_{i=1}^p \tb^{-\ub^{(i)}} - \sum_{i = 1}^{p-1} \tb^{(-\ub^{(i)}) \vee (-\ub^{(i+1)})}\right)\\
	&= \sum_{\ab \in \ZZ^2} \tb^\ab \left(\sum_{i=1}^p h_{\ab + \ub^{(i)}} - \sum_{i = 1}^{p-1} h_{\ab - ((-\ub^{(i)}) \vee (-\ub^{(i+1)}))}\right) \\
	&= \sum_{\ab \in \ZZ^2} \tb^\ab \left(\sum_{i=1}^p h_{\ab + \ub^{(i)}} - \sum_{i = 1}^{p-1} h_{(\ab + \ub^{(i)}) \wedge (\ab + \ub^{(i+1)})}\right). 
	\end{align*}
	Note that the coefficient of $\tb^{\mathbf{0}}$ equals $\sigma_U(H)$, hence the latter is nonnegative.
	So $H$ satisfies \eqref{eq:st}. 
	
\item[(d) $\Rightarrow$ (a):]
	First, choose a decomposition $H = P + H'$, such that $P$ is a polynomial with non-negative coefficients and $H'$ has a Hilbert decomposition with no polynomial part.
	Let $\tilde{\Nb} \in \ZZ^2$ such that all non-zero coefficients of $P$ lie in degrees below $\tilde{\Nb}$.
	Again, using \Cref{lem:dec} there is $\Nb \geq \tilde{\Nb}$ and a decomposition $H' = H_1 + H_2$, such that $H_1$ and $H_2$ satisfy the conditions mentioned above.
	
	We are going to construct a Hilbert decomposition without polynomial part of $H_3 := P + H_1$.
	As $H_2$ already has such a Hilbert decomposition, this is enough to prove the claim.
	For this, we first need to show that $H_3$ still satisfies \eqref{eq:st}.
	
	Let $U = (\ub^{(i)})_{i=0}^p \in \ST$ be a declining sequence.
	Then $U' := (\ub^{(i)} \wedge \Nb)_{i=0}^p$ is a weakly declining sequence. 
	If $\Nb \in U'$, then it is easy to see that
	\[ \sigma_{U'}(H_3) = \coeff{H_3}{\Nb} = 0. \]
	Otherwise, our choice of $\Nb$ and \Cref{lem:dec} imply that $\coeff{H_3}{\ab \wedge \Nb} = \coeff{H_3}{\ab}$ for all $\ab \in \ZZ^2$.
	Hence it follows that $\sigma_{U}(H_3) = \sigma_{U'}(H_3)$.
	Further, it holds that 
	\[ \sigma_{U'}(H_3) = \sigma_{U'}(H_3 + H_2) \geq 0, \]
	because the coefficients of $H_2$ vanish below $\Nb$ and $H = H_2 + H_3$ satisfies \eqref{eq:st}.
	Hence $H_3$ satisfies \eqref{eq:st} as well.
	The series $H_3$ is of the form
	\[ Q_0 + \frac{Q_1}{1-t_1} + \frac{Q_2}{1-t_2}. \]
	We obtain a Hilbert decomposition of $H_3$ without polynomial part by repeatedly applying \Cref{l:r1}.
\end{asparadesc}
\end{proof}

With a little more work, one can show directly that condition (d) implies (c). Indeed, we already showed that every declining sequence gives rise to a fractional ideal. On the other hand, to every fractional ideal one can associate a declining sequence in a natural way, thus proving the equivalence directly.

\section{The non-standard \texorpdfstring{$\ZZ$}{Z}-graded case}
\newcommand{\sg}{\langle \alpha,\beta \rangle}
\newcommand{\fc}{\mathcal{F}_{\alpha,\beta}}
Let $R = \kk[X,Y]$ with the grading given by $\deg X = \alpha$ and $\deg Y = \beta$ for two coprime numbers $\alpha, \beta \in \NN$.
In \cite{MU}, the second and third author characterized the Hilbert series of modules of positive depth over this ring.
In this section, we give a reformulation of this result along the lines of the previous results.

First of all, we recall the characterization of Hilbert series of finitely generated graded modules over this ring (cf.~the authors in \cite[Theorem 2.6]{KMU}): These are exactly the formal Laurent series $H$ with nonnegative integral coefficients, such that $H\cdot (1-t^\alpha)(1-t^\beta)$ is a Laurent polynomial, thus there is no analog of condition (3) of \Cref{cor:bigraded} in this setting.

In order to state the characterization of Hilbert series with positive Hilbert depth we need some more notation.
Denote by $\sg \subseteq \NN$ the numerical semigroup generated by $\alpha$ and $\beta$.
A \demph{fundamental couple} (with respect to $\alpha$ and $\beta$) is pair $[I,J]$ of two integer sequences $I = (i_k)_{k=0}^m$ and $J = (j_k)_{k=0}^m$ which are subject to the following conditions:
\begin{enumerate}
	\item[(0)] $i_0=0$;
	\item[(1)] $i_1, \ldots , i_m, j_1, \ldots , j_{m-1} \in \NN\setminus\sg$ and $j_0,j_m \leq \alpha \beta$;
	\item[]
	\item[(2)] $\begin{array}{lllll}
	i_k \equiv j_k &\!\!\!\mod \alpha &~\mbox{and}~& i_k < j_k & ~~\mbox{for} ~~ k= 0, \ldots ,m;\\
	j_k \equiv i_{k+1} &\!\!\!\mod \beta &~\mbox{and}~& j_k > i_{k+1} & ~~\mbox{for} ~~ k= 0, \ldots ,m-1;\\
	j_m \equiv i_{0}  &\!\!\!\mod \beta &~\mbox{and}~& j_m \geq i_{0}. &
	\end{array}$
	\item[]
	\item[(3)] $|i_k - i_{\ell}| \in \NN\setminus\sg ~~\mbox{for} ~~1 \leq k < \ell \leq m$.
\end{enumerate}
Denote by $\fc$ the set of all fundamental couples. 
\medskip

The characterization of positive Hilbert depth over $R$ can be stated as follows:

\begin{theorem}\label{thm:nonstandart}
	Let $H \in \ZZ(\!(t)\!)$ be a formal Laurent series, which is the Hilbert series of some finitely generated graded $R$-module.
	Let further  $S := R / (X^\beta - Y^\alpha)$.
	Then the following statements are equivalent:
	\begin{enumerate}[(a)]
		\item $H$ hat positive Hilbert depth.
		\item For any finitely generated torsionfree $S$-module $N$, it holds that
		\begin{equation}\label{eq:HR2}
		\frac{H\cdot H_N}{H_R} \geq 0.
		\end{equation}
		\item Condition (b) holds for any finitely generated torsionfree $S$ module of rank $1$. 
		\item $H = \sum_{i} h_{i} t^i$ satisfies the condition
		\begin{equation}\label{eq:star}
		\sum_{i \in I} h_{i+n} \leq \sum_{j \in J} h_{j+n} ~\text{for all }n \in \NN, [I,J] \in \fc. \tag{$\star$}
		\end{equation}
	\end{enumerate}
\end{theorem}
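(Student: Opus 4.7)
The equivalence (a) $\Leftrightarrow$ (d) is the main result of \cite{MU}, so the plan is to close the cycle via (a) $\Rightarrow$ (b) $\Rightarrow$ (c) $\Rightarrow$ (d); the middle step is trivial.

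For (a) $\Rightarrow$ (b) I would mirror the first step of the proof of \Cref{th:stc}. By \Cref{prop:hdep}, positive Hilbert depth gives a Hilbert decomposition without polynomial part,
\[
H \;=\; \frac{Q_\alpha}{1-t^\alpha} + \frac{Q_\beta}{1-t^\beta} + \frac{Q_{\alpha\beta}}{(1-t^\alpha)(1-t^\beta)},
\]
with $Q_\alpha, Q_\beta, Q_{\alpha\beta} \in \NN[t^{\pm 1}]$. Dividing by $H_R=1/((1-t^\alpha)(1-t^\beta))$ yields $\frac{H}{H_R} = Q_\alpha(1-t^\beta) + Q_\beta(1-t^\alpha) + Q_{\alpha\beta}$. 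Because $\gcd(\alpha,\beta)=1$ the polynomial $X^\beta-Y^\alpha$ is irreducible, so $S$ is a domain. For any torsionfree $S$-module $N$, multiplication by $X$ and by $Y$ is therefore injective, forcing $(1-t^\alpha)H_N\geq 0$ and $(1-t^\beta)H_N\geq 0$. Multiplying through displays $\frac{H\cdot H_N}{H_R}$ as a sum of nonnegative series.

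For (c) $\Rightarrow$ (d), the plan is to associate to each fundamental couple $[I,J] = ((i_k)_{k=0}^m, (j_k)_{k=0}^m)\in\fc$ the finitely generated rank-$1$ torsionfree $S$-module (a fractional ideal)
\[
\mathfrak{a}_{[I,J]} \;:=\; \sum_{k=0}^m t^{-j_k}\, S \;\subseteq\; \kk[t^{\pm 1}],
\]
and to establish the Hilbert series identity
\[
\frac{H_{\mathfrak{a}_{[I,J]}}}{H_R} \;=\; \sum_{k=0}^m t^{-j_k} \;-\; \sum_{k=0}^m t^{-i_k}.
\]
Granted this identity, condition (c) applied with $N=\mathfrak{a}_{[I,J]}$ and read off at the coefficient of $t^n$ for $n\in\NN$ produces exactly $\sum_k h_{n+j_k}\geq \sum_k h_{n+i_k}$, which is $(\star)$.

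The main obstacle is verifying the above identity. Setting $D:=\bigcup_{k=0}^m(\sg - j_k)\subseteq\ZZ$, we have $H_{\mathfrak{a}_{[I,J]}}=\sum_{n\in D}t^n$, and the coefficient of $t^n$ in $(1-t^\alpha)(1-t^\beta)H_{\mathfrak{a}_{[I,J]}}$ is the alternating indicator
\[
[n\in D]-[n-\alpha\in D]-[n-\beta\in D]+[n-\alpha-\beta\in D].
\]
The task is to show that this vanishes except at the $m+1$ points $-j_k$ (value $+1$) and the $m+1$ points $-i_k$ (value $-1$). Each axiom of a fundamental couple should play a precise role: (0) pins the anchor $i_0=0$; (1) confines the intermediate $i_k, j_k$ to the gap set $\NN\setminus\sg$, so that the corners of $D$ at $-i_k$ and $-j_k$ are not absorbed into the bulk; the congruences in (2), together with the closure condition $j_m\equiv i_0\bmod\beta$, encode that each $-j_k$ is reached from $-i_k$ by a step of $\alpha$ and from $-i_{k+1}$ by a step of $\beta$; and (3) rules out overlap between the shifted copies $\sg - j_k$ that would cause extra cancellations. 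I expect a careful but elementary bookkeeping of ``corners'' of $D$ to give the identity; the small check $\alpha=2$, $\beta=3$, $[(0,1),(4,3)]$, which yields $\mathfrak{a}=(t^{-4},t^{-3})$ with $H_\mathfrak{a}/H_R = t^{-4}+t^{-3}-1-t^{-1}$, already confirms the expected shape.
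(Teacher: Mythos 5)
Your chain of implications matches the paper exactly: (a)\,$\Rightarrow$\,(b) via the Hilbert decomposition without polynomial part, (b)\,$\Rightarrow$\,(c) trivial, (c)\,$\Rightarrow$\,(d) via an explicit fractional $S$-ideal, and (d)\,$\Rightarrow$\,(a) quoted from \cite{MU}. Your $\mathfrak{a}_{[I,J]}$ is the paper's module $N=(t^{\alpha\beta-j_0},\dotsc,t^{\alpha\beta-j_m})S$ up to the global degree shift $t^{\alpha\beta}$, so the two proofs use the same test module.

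The genuine gap is in (c)\,$\Rightarrow$\,(d): you state the Hilbert series identity
\[
\frac{H_{\mathfrak{a}_{[I,J]}}}{H_R} \;=\; \sum_{k=0}^m t^{-j_k} \;-\; \sum_{k=0}^m t^{-i_k}
\]
but do not prove it; you only describe a plan (``a careful but elementary bookkeeping of corners of $D$'') together with one sanity check in the case $\alpha=2$, $\beta=3$. That identity is the entire content of this implication, and the bookkeeping is not routine: one must show that the alternating indicator of the semigroup module $D=\bigcup_k(\sg-j_k)$ vanishes everywhere except at the $2(m+1)$ anchor points, and every one of the four axioms of a fundamental couple enters in a precise way. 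In particular, you attribute to axiom~(3) the job of ``ruling out overlap,'' but overlaps between $\sg-j_k$ and $\sg-j_\ell$ certainly do occur (the union is eventually all of $\ZZ_{\geq N}$); what (3) actually controls is where the first overlap happens, so that the inclusion--exclusion telescopes to exactly the $-i_k$'s. Without that analysis there is no proof.

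The paper avoids this combinatorial bookkeeping by a different route: it first proves Lemma~\ref{lemma:fundcouple}, which parametrizes a fundamental couple by two monotone sequences $(a_k)$, $(b_k)$ with $i_k=\alpha\beta-a_{k-1}\alpha-b_k\beta$ and $j_k=\alpha\beta-a_k\alpha-b_k\beta$, then lifts $N$ to the monomial ideal $\tilde N=(X^{a_0}Y^{b_0},\dotsc,X^{a_m}Y^{b_m})\subseteq R$, observes $N\cong\tilde N/(X^\beta-Y^\alpha)$, and reads off $H_N/H_R$ from the (Hilbert--Burch) minimal free resolution of $\tilde N$, whose syzygies sit in degrees $a_{k-1}\alpha+b_k\beta$. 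That replaces the indicator bookkeeping with a standard resolution computation, but it requires the nontrivial structural Lemma~\ref{lemma:fundcouple}. Either route can work; as written, yours is incomplete at exactly the step where the work is.

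One small additional omission: in (c)\,$\Rightarrow$\,(d) you also need the index set $\fc$ to be realized by your modules without any degree-shift mismatch. Since condition $(\star)$ is a family of inequalities over all $n\in\NN$, replacing $N$ by a twist $N(-\alpha\beta)$ is harmless, so the shift between your $\mathfrak{a}_{[I,J]}$ and the paper's $N$ is indeed innocuous — but this should be said, since you read the inequality off ``at the coefficient of $t^n$'' and the anchors of your ideal are negative.
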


\begin{remark}
\begin{asparaenum}
	\item The equivalence of (a) and (d) is the main theorem of \cite{MU}. This is also the most difficult part of the result.
	\item Note that the denominator of \Cref{eq:HR2} is $H_R$, in contrast to the $H_S$ in the denominator of \Cref{eq:HR}.
	In both cases, it is easy to see that $\frac{H_M H_N}{H_R} \geq 0$ implies that $\frac{H_M H_N}{H_S} \geq 0$. We present two examples to show that this implication is strict in both cases:
	\begin{enumerate}
		\item In the setting of \Cref{th:stc}, let $R = \kk[X_1,X_2,Y_1,Y_2]$, $N = S$ and $H_M = 1 /(1-t_1)$. Then $M$ clearly has Hilbert depth $1$, but
		\[ \frac{H_M H_N }{H_R} =\frac{1}{1-t_1} \cdot\frac{1}{(1-t_1)(1-t_2)} \cdot (1-t_1)^2(1-t_2)^2 = 1-t_2 \ngeq 0. \]
		\item In the setting of \Cref{thm:nonstandart}, let $\alpha = 2, \beta = 3$ and consider the Hilbert series $H_M = 1 + t^3$. The module $M$ has finite length and therefore Hilbert depth $0$, but for any torsionfree $S$-module $N$ it holds that
		\[ \frac{H_M H_N}{H_S} = (1+t^3)\cdot \frac{(1-t^3)(1-t^3)}{1-t^6}H_N = (1-t^2) H_N \geq 0, \]
		where for the last inequality, we use that multiplication by $X$ gives an injection $N(-2) \hookrightarrow N$.
	\end{enumerate}
\end{asparaenum}
\end{remark}

\noindent We need the following result about the structure of fundamental couples.
\begin{lemma}\label{lemma:fundcouple}
	Let $[I,J]$ be a fundamental couple of length $m$.
	Then there exist two integer sequences
	\begin{align*}
	\beta &> a_0 > a_1 > \dotsb > a_m = 0 \text{ and}\\
	0 &= b_0 < b_1 < \dotsb < b_m < \alpha
	\end{align*}
	such that
	\begin{equation}\label{eq:pres}
	\begin{aligned}
	i_k &= \alpha\beta - a_{k-1} \alpha - b_k \beta &\text{for } 1 \leq k \leq m &\text{ and}\\
	j_k &= \alpha\beta - a_k \alpha - b_k \beta &\text{for } 0 \leq k \leq m
	\end{aligned}
	\end{equation}
\end{lemma}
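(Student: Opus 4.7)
The plan is to build the two sequences $(a_k)$ and $(b_k)$ directly from the classical Sylvester/Frobenius normal form. Since $\gcd(\alpha,\beta)=1$, any integer $n$ with $0\le n\le\alpha\beta$ has a representation $n=\alpha\beta-a\alpha-b\beta$ with $0\le a\le\beta$ and $0\le b\le\alpha$; uniqueness of a pair $(a,b)$ with $a\in[0,\beta)$ and $b\in[0,\alpha)$ follows because if $(a,b)$ and $(a',b')$ both work then $(a-a')\alpha=(b'-b)\beta$, forcing $\beta\mid(a-a')$ and $\alpha\mid(b-b')$. The classical fact I will rely on is that $n\in\NN\setminus\sg$ (for $0<n<\alpha\beta$) if and only if both coefficients satisfy $1\le a\le\beta-1$ and $1\le b\le\alpha-1$; when one of the two coefficients vanishes, $n$ is a nonnegative multiple of $\alpha$ or of $\beta$ and hence belongs to $\sg$.

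First, I would apply this normalization to every entry of $[I,J]$. For $1\le k\le m$, the hypothesis $i_k\in\NN\setminus\sg$ gives a unique $i_k=\alpha\beta-A_k\alpha-B_k\beta$ with $A_k\in(0,\beta)$, $B_k\in(0,\alpha)$; similarly for $j_1,\dots,j_{m-1}$. For the boundary entries, the congruence $j_0\equiv i_0=0\pmod\alpha$ forces the $\beta$-coefficient of $j_0$ to be zero, so $j_0=\alpha\beta-a_0\alpha$ for some $a_0\in[0,\beta)$; I set $b_0:=0$. Dually, $j_m\equiv 0\pmod\beta$ yields $j_m=\alpha\beta-b_m\beta$ with $b_m\in[0,\alpha]$; I set $a_m:=0$. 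Next I would match coefficients via the remaining congruences: $i_k\equiv j_k\pmod\alpha$ translates, after uniqueness of the normal form, into equality of the $\beta$-coefficients of $i_k$ and $j_k$, defining a common value $b_k$; and $j_k\equiv i_{k+1}\pmod\beta$ gives equality of the $\alpha$-coefficients of $j_k$ and $i_{k+1}$, defining a common value $a_k$. These definitions are consistent with the boundary conventions $b_0=0$, $a_m=0$, and yield exactly the representations~\eqref{eq:pres}.

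Finally, I would convert the strict inequalities of the fundamental couple into the two chains. The inequality $i_k<j_k$ rewrites as $\alpha\beta-a_{k-1}\alpha-b_k\beta<\alpha\beta-a_k\alpha-b_k\beta$, i.e.\ $a_k<a_{k-1}$, giving the chain $a_0>a_1>\dots>a_m=0$; the inequality $j_k>i_{k+1}$ rewrites as $b_k<b_{k+1}$, giving $0=b_0<b_1<\dots<b_m$. The upper bound $a_0<\beta$ follows from $j_0>i_0=0$, and $b_m<\alpha$ follows from $j_m\ge2$ when $m\ge1$ (since $j_m>i_m\ge1$ with $i_m$ a gap), the case $m=0$ being trivial with $j_0=\alpha\beta$. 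The main obstacle I anticipate is not conceptual but bookkeeping at the corners $i_0$, $j_0$ and $j_m$: these entries are not gaps, so the clean ``both coefficients strictly inside the open intervals'' form of Sylvester's statement has to be replaced by the weaker version allowing one coefficient to vanish, and one must verify that the resulting indexing conventions match the claimed formulas. Note that condition~(3) of the definition plays no role in the proof of existence, as it only constrains which pairs $[I,J]$ actually occur, not their coordinate-wise description.
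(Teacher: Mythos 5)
Your proof is correct and follows essentially the same route as the paper: both rely on the Sylvester--Frobenius normal form $e=\alpha\beta-a\alpha-b\beta$ (the paper cites \cite[Lemma 1]{R}), define $(a_k,b_k)$ from the $j_k$'s with the same boundary conventions $b_0=0$, $a_m=0$, use the congruences in condition (2) to propagate these coefficients to the $i_k$'s, and read off the strict chain inequalities from $i_k<j_k$ and $j_k>i_{k+1}$. The only organizational difference is in the middle step: you normalize each $i_k$ directly (using that $i_k$ is a gap, hence has a representation with both coefficients in the open ranges) and then match coefficients mod~$\alpha$ and mod~$\beta$ separately by uniqueness, whereas the paper defines $\tilde\imath_k:=\alpha\beta-a_{k-1}\alpha-b_k\beta$, observes $i_k\equiv\tilde\imath_k\pmod{\alpha\beta}$, and rules out a nonzero shift $r\alpha\beta$ using $0\le i_k<j_k$. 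Your variant is marginally cleaner; one small imprecision is your opening claim that every $n\in[0,\alpha\beta]$ has a normal form with $0\le a\le\beta$, $0\le b\le\alpha$ --- this is harmless here since you only invoke normal forms for gaps and for the explicitly handled boundary terms $j_0,j_m$, but the blanket statement would need the non-strict bounds to be interpreted carefully (e.g.\ $n=0$ needs $a=\beta$ or $b=\alpha$).
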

\begin{proof}
	Recall from Rosales, Garc\'ia-S\'anchez, Garc\'ia-Garc\'ia and Jim\'enez-Madrid \cite[Lemma 1]{R} that an integer $e \in \ZZ$ is not contained in $\sg$ if and only if there are $a,b \in \NN$ with $a,b \geq 1$ such that $e = \alpha\beta - a \alpha - b \beta$.
	In this case, $a$ and $b$ are uniquely determined and we denote them by $a(e)$ and $b(e)$, respectively.
	
	Now consider a fundamental couple $[I,J]$.
	We define $a_{k} := a(j_k)$ and $b_k := b(j_k)$ for $1 \leq k \leq m-1$. 
	Further, $j_0 \equiv 0 \mod{\alpha}$ so it can be written as $j_0 = \alpha\beta - \alpha a_0$ for some $0 \leq a_0 < \beta$. We set $b_0 = 0$, so $j_0 = \alpha\beta - a_{0} \alpha - b_0 \beta$ as required.
	Similarly, $j_m \equiv 0 \mod{\beta}$, so there exists $0 \leq b_m < \alpha$ such that $j_m = \alpha\beta - a_{m} \alpha - b_m \beta$ with $a_m = 0$.
	
	Next, set $\tilde{\imath}_k := \alpha\beta - a_{k-1} \alpha - b_k \beta$ for $1 \leq k \leq m$.
	We need to show that $i_k = \tilde{\imath}_k$. For this, note that $i_k \equiv j_k \equiv \tilde{\imath}_k \mod{\alpha}$ and similarly $i_k \equiv \tilde{\imath}_k \mod{\beta}$.
	Hence $i_k = \tilde{\imath}_k + r \alpha \beta$ for some $r \in \ZZ$.
	As $\tilde{\imath}_k < \alpha\beta$, it follows that $\tilde{\imath}_k + r\alpha \beta < 0$ for $r < 0$, so we may assume that $r \geq 0$.
	On the other hand, it $r > 0$, then
	\begin{align*}
	i_k &= \tilde{\imath_k} + r \alpha \beta 
	= \alpha\beta - a_{k-1} \alpha - b_k \beta  + r \alpha \beta\\
	&= \alpha\beta - a_{k} \alpha - b_k \beta  + \alpha(r \beta - a_{k-1} + a_k) > j_k,
	\end{align*}
	which is a contradiction. Thus $r = 0$ and so $i_k = \tilde{\imath}_k$ for $1 \leq k \leq m$.
	
	Finally, $i_k < j_k, j_{k-1}$ implies that $a_k < a_{k-1}$ and $b_k > b_{k-1}$ for $1 \leq k \leq m$.
\end{proof}

\begin{proof}[Proof of \Cref{thm:nonstandart}]
\begin{asparadesc}
	\item[(a) $\Rightarrow$ (b):]
		If $H$ has positive Hilbert depth, then it can be written as
		\[ H = \frac{Q_1}{1-t^\alpha} + \frac{Q_2}{1-t^\beta} + \frac{Q_{1,2}}{(1-t^\alpha)(1-t^\beta)}. \]
		with $Q_1, Q_2, Q_{1,2} \in \NN[t^{\pm 1}]$.
		Using that $N$ is torsionfree, it easily follows that \Cref{eq:HR2} holds for each summand and therefore for $H$.

	\item[(b) $\Rightarrow$ (c):] This is trivial.
	
	\item[(c) $\Rightarrow$ (d):]
		Let $[I,J]$ be a fundamental couple.
		Recall that $S = \kk[t^\alpha, t^\beta]$ is the monoid algebra of $\sg$.
		Let $N \subseteq \kk[t]$ be the $S$-module generated by $t^{\alpha\beta - j_0}, \dotsc, t^{\alpha\beta - j_m}$.
		This module is torsionfree, hence $\frac{H_M H_N}{H_R} \geq 0$ by assumption.
		To see that this inequality implies \Cref{eq:star}, we need to compute the Hilbert series of $N$.
		
		Let $(a_k)_{k=0}^m, (b_k)_{k=0}^m$ be the sequences as in Lemma \ref{lemma:fundcouple} and let 
		\[\tilde{N} := (X^{a_0}Y^{b_0}, \dotsc, X^{a_m}Y^{b_m}).\]
		It is easy to see that $\tilde{N}$ is the preimage of $N$ under the projection $R \to S$. 
		In particular, note that $X^\beta - Y^\alpha \in \tilde{N}$, because $X^{a_0}, Y^{b_m} \in \tilde{N}$.
		Hence $N \cong \tilde{N} / (X^\beta - Y^\alpha)$ and thus $H_N = H_{\tilde{N}} - t^{\alpha\beta} H_R$.
		
		By considering the minimal free resolution of $\tilde{N}$, one sees that its syzygies are generated in the degrees $a_{k-1} \alpha + b_{k} \beta$ for $1 \leq k \leq m$. 
		So we can compute the Hilbert series of $N$ as follows:
		\begin{align*}
		\frac{H_N}{H_R} &= \frac{H_{\tilde{N}} - t^{\alpha\beta} H_R}{H_R} 
		=\sum_{k=0}^m t^{a_k\alpha + b_k\beta} - \sum_{k=1}^m t^{a_{k-1} \alpha + b_k \beta} - t^{\alpha \beta}\\
		&=\sum_{k=0}^m t^{\alpha\beta - j_k} - \sum_{k=1}^m t^{\alpha\beta - i_k} - t^{\alpha\beta - i_0}
		=t^{\alpha\beta}\left(\sum_{j \in J} t^{-j} - \sum_{i \in I} t^{-i}\right)
		\end{align*}
		Together with \Cref{eq:HR2}, we obtain the following:
		\begin{align*}
		0 \leq \frac{H\cdot H_N}{H_R}  &= (\sum_{n \in \ZZ} h_n t^n) t^{\alpha\beta}\left(\sum_{j \in J} t^{-j} - \sum_{i \in I} t^{-i}\right) \\
		&= t^{\alpha\beta} \sum_{n \in \ZZ} t^n \left(\sum_{j \in J} h_{n+j} - \sum_{i \in I} h_{n+i}\right)
		\end{align*}
		So \Cref{eq:star} is satisfied for $[I,J]$.
		
	\item[(d) $\Rightarrow$ (a):] This is Theorem 3.13 of \cite{MU}.
\end{asparadesc}
\end{proof}

\section*{Acknowledgments}

The second author wishes to express his gratitude to the Institute of Mathematics at the University of Osnabr\"uck for kind hospitality.

\bibliographystyle{amsplain}
\bibliography{Girlanden_v18}

\end{document}